\newtheorem{lemma}{Lemma}[section]
\newtheorem{theorem}[lemma]{Theorem}
\newtheorem{corollary}[lemma]{Corollary}
\newtheorem{proposition}[lemma]{Proposition}
\theoremstyle{definition}
\newtheorem{remark}[lemma]{Remark}
\newtheorem{example}[lemma]{Example}
\newtheorem{defn}[lemma]{Definition}
\theoremstyle{remark}
\newcommand{\posint}{\ensuremath{\mathbb{Z}^{+}}} 
\newcommand{\kk}{\ensuremath{\Bbbk}}
\newcommand{\FF}{\ensuremath{\mathbb{F}}}
\newcommand{\RR}{\ensuremath{\mathbb{R}}} 
\newcommand{\ZZ}{\ensuremath{\mathbb{Z}}} 
\renewcommand{\geq}{\geqslant}
\renewcommand{\leq}{\leqslant}
\newcommand{\conv}{\operatorname{conv}}
\newcommand{\interior}{\operatorname{int}}
\newcommand{\exc}{T_{0}}
\newcommand{\polyspace}{\mathcal{L}}
\title{Toric Surface Codes and the Periodicity of Polytopes}
\author{Amelia Gibbs}
\author{Eliza Hogan}
\author{Kelly Jabbusch}
\author{Jenna Plute}
\author{Nicholas Toloczko}
\date{\today}
\begin{document}

\begin{abstract} 
    Toric codes are error-correcting codes that are derived from toric varieties, which hold a unique correspondence to integral convex polytopes. In this paper, we focus on integral convex polytopes $P \subseteq \RR^2$ and the toric codes they define.  
   We begin by studying \emph{period-1 polytopes} -- polytopes satisfying the property $L(tP)$ = $tL(P)$ for all $t \in \mathbb{Z}^+$, where $tP$ is the $t$-dilate of $P$, and we prove an explicit formula for the minimum distance of toric codes associated to a particular class of period-1 polytopes.
    We also apply the methods of Little and Schwarz, using Vandermonde matrices,  to compute the minimum distance of another class of period-1 polytopes.
\end{abstract}

\maketitle

\section{Introduction}
Toric codes are the natural extensions of Reed-Solomon codes. They were originally introduced by Hansen for two-dimensional spaces, (see \cite{Hansen98} and \cite{Hansen}), and subsequently studied by various authors including Joyner \cite{Joyner04},  Little and Schenck \cite{LSchenk}, Little and Schwarz \cite{LSchwarz}, Ruano \cite{ruano}, and Soprunov and Soprunova \cite{SS1}, \cite{SS2}. To begin the construction of a toric code, we first fix a finite field $\FF_q$ and let $P \subset \mathbb{R}^m$ be an integral convex polytope, which is contained in the $m$-dimensional box $[0,q-2]^m$ and has $k$ lattice points. The toric code $C_P(\FF_q)$ is obtained by evaluating linear combinations of monomials corresponding to the lattice points of $P$ over the finite field $\FF_q$.

To determine if a given code is ``good" from a coding theoretic perspective, one considers parameters associated to the code:  the block length, the dimension, and the minimum distance (see Section \ref{sec:prelim} for the precise definitions). In general, an ideal code will have minimum distance and dimension large with respect to its block length. For the class of toric codes, the first two parameters are easy to compute.  Given a toric code $C_P(\FF_q)$, the block length is $N=(q-1)^m$, and the dimension of the code is given by the number of lattice points $k= |P \cap \ZZ^m|$, \cite{ruano}. The minimum distance is not as easy to compute, and various authors have given formulas for computing (or bounding) minimum distances for toric codes defined by special classes of polytopes.  In the case of toric surface codes (where $m=2$), Hansen computed the minimum distance for codes coming from Hirzebruch surfaces (\cite{Hansen98}, \cite{Hansen}), Little and Schenck determined upper and lower bounds for the minimum distance of a toric surface code by examining Minkowski sum decompositions \cite{LSchenk}, and Soprunov and Soprunova improved these bounds for surface codes by examining the Minkowski length \cite{SS1}.  In the higher dimensional case, Little and Schwarz used Vandermonde matrices to compute minimum distances of simplices and rectangular boxes \cite{LSchwarz}, and Soprunov and Soprunova extended these results in \cite{SS2}.

In this paper, we will focus on integral convex polytopes $P \subset \RR^2$ and the toric codes they define.  We begin in Section \ref{Sec:MinkowskiLength} 
by investigating the Minkowski length of integral convex polytopes in the plane, and how polytope operations, such as dilation and Minkowski sum, effect the Minkowski length.  In particular, we compute the Minkowski length of certain classes of polytopes.  We further investigate which polytopes are of period $1$, that is, satisfy the property that the Minkowski length of a $t$-dialate of a polytope $P$ is equal to $t$ times the Minkowski length of $P$: $L(tP) =tL(P)$.  

Building on the results of Soprunov and Soprunova \cite{SS1}, in Section \ref{sec:our-method}, we use maximal decompositions to compute the minimum distance of toric codes arising from certain integral convex period $1$ polytopes in $\RR^2.$  As an application, we determine the minimum distance for codes arising from polytopes of the form 
    $Z = m[0,{e}_{1}] + n[0,{e}_{2}] + \ell[0, {e}_{1}+{e}_{2}]$
 and $Q = m[0,{e}_{1}] + n[0,{e}_{2}] + \ell[0,{e}_{1}+{e}_{2}] + s[0,{e}_{1}-{e}_{2}] + 2\ell\Delta,$
where $\Delta$ is the standard $2$-simplex, $\Delta = \conv\{(0,0), (1,0), (0,1)\}$ (see Corollary \ref{cor:smallest-maxl-min-dist} and Corollary \ref{cor:special-quad-clipped-rect-min-dist}).

In \cite{LSchwarz}, Little and Schwarz used special configurations of points in $(\FF_q^\times)^m$ and determinants of Vandermonde matrices to compute minimum distances of codes coming from simplices and rectangular polytopes in $\RR^m$.  This process gave a quite elementary method to compute minimum distances of certain toric codes.
In Section \ref{sec:ls-method}, we use their methods to derive the minimum distance formula for toric codes arising from polytopes of the form $P = \ell \Delta + \ell[0,e_1]+\ell[0,e_2]$, where $\ell$ is any positive integer (see Corollary \ref{cor:Van-Min-Dist}).

\section{Preliminaries} \label{sec:prelim}
\subsection{Linear Codes}
We begin with some basics from coding theory.
Fix a prime power $q$,  a \emph{linear code} $C$ is a vector subspace of $\FF_{q}^{m}$ for some $m \ge 1$ where $\FF_{q}$ denotes the finite field with $q$ elements.
Any ${c} = (c_{1}, \ldots, c_{n}) \in C$ is called a \emph{codeword} of $C$ and $n$ is called the \emph{block length} of $C$.
Since $C$ is a vector space, we define the \emph{dimension} of the code $C$ to simply be the dimension of $C$ as a vector space: $k = \dim C$.
Intuitively, the dimension of a linear code represents the amount of information that a codeword contains.

Given two code words $a, b \in C$ the \emph{Hamming distance} between $a$ and $b$ is the number of non-zero entries in $a-b$.
The \emph{Hamming weight} of ${a} \in C$ is simply the number of non-zero entries in ${a}$.
The \emph{minimum distance} of $C$, denoted $d(C)$, is the minimum Hamming distance between pairs of distinct codewords.
Since $C$ is linear, this is equivalent to the minimum Hamming weight of any non-zero codeword.  Intuitively, the minimum distance of the code represents the number of errors the code can correct.  

\subsection{Newton Polytopes and Minkowski Sum}
Let $f \in \kk[t_{1}^{\pm 1}, \ldots, t_{m}^{\pm 1}],$ where $\kk$ is a field.
Write \[
    f = \sum_{\alpha \in \ZZ^{m}} c_{\alpha}t^{\alpha}
\]
where $\alpha = (\alpha_{1}, \ldots, \alpha_{m})$ and $t^{\alpha} = t_{1}^{\alpha_{1}} \cdots t_{m}^{\alpha_{m}}$.
The \emph{Newton polytope} of $f$, denoted $P_f$, is defined as \[
    P_f = \conv \left\{ \alpha \in \ZZ^{m} : c_{\alpha} \neq 0 \right\} \subset \RR^{m}.
\]
Note that there are only finitely many $\alpha \in \ZZ^{m}$ with $c_{\alpha} \neq 0$.
Therefore, $P_f$ is an integral convex polytope.

Recall the \emph{Minkowski sum} of two polytopes $P,Q \subset \RR^{m}$ is defined as \[
    P+Q = \left\{ p+q : p \in P, q \in Q \right\}.
\]
Being able to write a polytope, $P$, as the Minkowski sum of ``simpler'' polytopes allows for factorization of any Laurent polynomial whose Newton polytope is $P$.  In the case of Newton polytopes, note that $P_{fg} = P_{f} + P_{g}.$  
By a \emph{primitive line segment} we mean a line segment whose only lattice points are the endpoints. If a polytope is a Minkowski sum of primitive lattice segments, we'll call it a \emph{zonotope}.

\subsection{Toric Codes}
A \emph{toric code} is defined as follows: fix an integral convex polytope $P \subset \RR^{m}$ and let $\polyspace_{P}$ denote the set of all $f \in \FF_{q}[t_{1}^{\pm 1}, \ldots, t_{m}^{\pm 1}]$ such that $P_f \subseteq P$ or, equivalently, \[
    \polyspace_{P} = \operatorname{span}_{\FF_{q}}\left\{ t^{\alpha} : \alpha \in P \cap \ZZ^{m} \right\}.
\]
Define the evaluation map \[ 
    \begin{array}{rcl}
        \operatorname{ev} : \polyspace_{P} & \to & \FF_{q}^{(q-1)^{m}} \\
        f & \mapsto & \left( f(\gamma) : \gamma \in (\FF_{q}^{\times})^{m} \right)
    \end{array}.
\]
The \emph{toric code associated to $P$}, denoted by $C_P$,  is defined to be the image of $\operatorname{ev}$.
It is clear that the block length of $C_{P}$ is $n=(q-1)^{m}$, and by \cite{ruano}, the dimension of $C_{P}$ is $\#P$ where $\#P := |P \cap \ZZ^{m}|$ is the number of lattice points of $P$.  
Computing the minimum distance for $C_{P}$ is a harder problem.  Note, by definition \[
    d(C_{P}) = (q-1)^{m} - \max_{0 \neq f \in \polyspace_{P}} |Z(f)|
\]
where $Z(f)$ denotes the set of all $\gamma \in (\FF_{q}^{\times})^{m}$ such that $f(\gamma)=0$.

\subsection{Minkowski Length}
Associated to an integral convex polytope $P$, we have a geometric invariant, the \emph{Minkowski length} of $P$, which was first introduced by Soprunov and Soprunova \cite{SS1}.

\begin{defn}
    Let $P \subset \RR^{m}$ be an integral convex lattice polytope.
    The \emph{Minkowski length} of $P$, denoted $L(P)$, is the maximum number of non-trivial (i.e. at least one dimensional) polytopes whose Minkowski sum is contained in $P$.
    Equivalently, $L(P)$ is the maximum number of primitive line segments whose Minkowski sum is contained in $P$.
    We say a polytope contained in $P$ is a \emph{maximal decomposition} in $P$ if it can be written as the Minkowski sum of $L(P)$ non-trivial polytopes.
\end{defn}
Newton polytopes allow us to interpret $L(P)$ in $\polyspace_{P}$, namely, $L(P)$ is the maximum number of irreducible polynomials in a factorization of any polynomial in $\polyspace_{P}$.
To see this, take $f \in \polyspace_{P}$ then write $f=f_{1} \cdots f_{r}$ where each $f_{i}$ is a nonconstant irreducible polynomial, then $P_f = P_{f_{1}} + \cdots + P_{f_{r}} \subseteq P$ so $r \le L(P)$.
Conversely, suppose that $Q_{1} + \cdots + Q_{L(P)} = Q \subseteq P$ is a maximal decomposition then $f = f_{1} \cdots f_{L(P)} \in \polyspace_{P}$ where $f_{i} = \sum_{\alpha \in Q_i \cap \ZZ^{m}} t^{\alpha}$ since $P_f = Q$.

The Minkowski length is a geometric invariant in the sense that for any affine transformation $T$ of the form $T({x}) = A{x} + \lambda$ where ${\lambda} \in \ZZ^{m}$ and $A \in \operatorname{GL}(m,\ZZ)$, we have $L(P) = L(T(P))$.
We denote the set of all such affine transformations by $\operatorname{AGL}(m, \ZZ)$.
To see this, first take a maximal decomposition in $P$, say $Q = Q_{1} + \cdots + Q_{L(P)}$, then $T(Q) = \mathbf{\lambda} + AQ_{1} + \cdots +AQ_{L(P)} \subseteq T(P)$ so $L(P) \le L(T(P))$.
Note that $T^{-1}({x}) = A^{-1}{x} - A^{-1}{\lambda}$ is contained in $\operatorname{AGL}(m,\ZZ)$.
Furthermore, $(T \circ T^{-1})({x}) = {x} = (T^{-1} \circ T)({x})$ so, by the above argument, $L(T(P)) \le L(T^{-1}(T(P))) = L(P)$.
Therefore, we have that $L(P) = L(T(P))$.
This motivates a notation of equivalence between lattice polytopes, first introduced by Little and Schwarz, \cite{LSchwarz}.  
\begin{defn}
    Let $P,Q \subset \RR^{m}$ be integral convex lattice polytopes.
    We say that $P$ and $Q$ are \emph{lattice equivalent}, and write $P \approx Q$, if there exists a $T \in \operatorname{AGL}(m,\ZZ)$ so that $T(P) = Q$.
\end{defn}

This is a useful notation of equivalence when discussing toric codes due to a result of Little and Schwarz \cite[Theorem 4]{LSchwarz} which states that lattice equivalent polytopes produce monomially equivalent codes.
If $C_{1}$ and $C_{2}$ are both codes with block length $n$ and dimension $k$ over $\FF_{q}$ and $G_{1}$ is a generator matrix for $C_{1}$, then $C_{1}$ and $C_{2}$ are said to be \emph{monomially equivalent} if there exists a $n \times n$ invertible diagonal matrix $D$ and a $n \times n$ permutation matrix $\Pi$ so that \[
    G_{2} = G_{1}D\Pi
\]
is a generator matrix for $C_{2}$.
Note that monomially equivalent codes have the same block length, dimension, and minimum distance.  In particular, if $P \approx Q$, then $d(C_P) = d(C_Q).$

\section{Minkowski Length and Polytope Operations}\label{Sec:MinkowskiLength}

There are algorithms for computing the Minkowski length of a polytope in $\RR^{2}$ \cite{SS1} and in $\RR^{3}$ \cite{Beckwith_2012}, both of which run in polynomial time in $\#P$, the number of lattice points in $P$. Soprunov and Soprunova \cite{SS3} proved that the Minkowski length of $tP$ is quasi-linear when $t$ is sufficiently large for fixed $P$ and computed the Minkowski length of certain classes of polytopes, which we record below:
\begin{proposition}\label{prop:SSMinkResults}
Let $\Delta \subset \RR^{m}$ denote the standard $m$-simplex, $\Pi = \alpha_{1}[0, {e}_{1}] + \cdots + \alpha_{d}[0, e_{d}] \subset \RR^{m}$, where $e_i \subset \RR^m$ are the standard vectors, and $T \subseteq \RR^2$ a triangle in the plane.  Then

\begin{enumerate} 

    \item \cite[Theorem 2.1]{SS3} $L(t\Delta)=t$ 
    \item \cite[Example 2.3]{SS3}  $L(\Pi) = \alpha_{1} + \cdots + \alpha_{m}$.
    \item \cite[Corollary 3.2]{SS3}  $L(T) = \ell(T)$ where $\ell(T)$ denotes the \emph{lattice diameter} of $T$.
\end{enumerate}

\end{proposition}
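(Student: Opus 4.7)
The plan is to prove each of the three parts separately, with the lower bound in each case coming from an explicit Minkowski decomposition and the upper bound coming from width estimates against carefully chosen linear functionals.

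For part (2), the lower bound is immediate since $\Pi$ itself is the Minkowski sum of $\alpha_j$ copies of $[0, e_j]$ for each $j$, giving $\alpha_1 + \cdots + \alpha_m$ primitive summands. For the upper bound, suppose $S_1 + \cdots + S_r \subseteq \Pi$ with each $S_i$ a primitive segment in direction $u_i \in \ZZ^m$. Since widths add under Minkowski sum and the width of $\Pi$ in the $e_j$ direction equals $\alpha_j$, we obtain $\sum_{i=1}^r |u_{i,j}| \leq \alpha_j$ for each $j$; summing over $j$ and using $\sum_j |u_{i,j}| \geq 1$ for any primitive integer vector yields $r \leq \alpha_1 + \cdots + \alpha_m$.

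For part (3), the lower bound follows because a lattice segment in $T$ of length $\ell(T)$ witnessing the lattice diameter is itself the Minkowski sum of $\ell(T)$ primitive segments. The upper bound is the substantive step. I would first apply an $\operatorname{AGL}(2, \ZZ)$ transformation to place the diameter-realizing segment of $T$ along the $x$-axis, which is permitted because $L$ is invariant under lattice equivalence. Then for any primitive decomposition $S_1 + \cdots + S_r \subseteq T$ with directions $u_i = (a_i, b_i)$, I would separate the segments into horizontal ($b_i = 0$) and non-horizontal ones, bounding the horizontal contribution by the width along the $x$-axis and the non-horizontal contribution using the height of $T$ together with the linear contraction of horizontal cross-sections as one moves away from the base. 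The main obstacle is combining these two contributions sharply so that they total to $\ell(T)$ rather than something weaker; I expect this to be the technical heart of the argument and to use triangle-specific geometry in an essential way.

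For part (1), when $m = 2$ the result follows from part (3) applied to $T = t\Delta$, since $\ell(t\Delta) = t$. For general $m$, the lower bound is given by $\sum_{i=1}^t [0, e_1] = [0, te_1] \subseteq t\Delta$. For the upper bound, I would use the functional $\jj = e_1 + \cdots + e_m$: its width on $t\Delta$ equals $t$, so any primitive decomposition $\sum_i S_i \subseteq t\Delta$ satisfies $\sum_i |\jj \cdot u_i| \leq t$. If every $u_i$ has $\jj \cdot u_i \neq 0$ we conclude by primitivity. The hard case is when some $u_i$ lie in the hyperplane $\jj^\perp$ (for example $u_i = e_a - e_b$), which contribute zero to this sum; the plan is to group these flat summands and induct on $m$, using that their partial Minkowski sum projects into a lower-dimensional lattice polytope inside a hyperplane slice of $t\Delta$ transverse to $\jj$. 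Matching the inductive hypothesis to the precise shape of this projection and accounting for the ``budget'' of $t$ consumed by the non-flat summands is the main obstacle I anticipate.
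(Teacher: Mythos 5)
This proposition is quoted from \cite{SS3} with citations in place of a proof, so there is no in-paper argument to compare against; what follows is an assessment of your proposal on its own terms. Your argument for part (2) is correct and self-contained: lattice widths along the coordinate functionals are additive under Minkowski sum and monotone under inclusion, the $j$-th coordinate width of $\Pi$ is $\alpha_j$, and summing over $j$ together with the observation that a primitive nonzero integer vector has $\ell^1$-norm at least $1$ gives the upper bound $r \le \alpha_1 + \cdots + \alpha_m$; combined with the obvious decomposition of $\Pi$ itself this settles (2).

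Parts (1) and (3), by contrast, are outlines rather than proofs, and you say so yourself. For (3) you reduce to a normal position for the diameter segment and split a hypothetical decomposition into horizontal and non-horizontal pieces, but you explicitly defer the step of combining the two bounds sharply to obtain exactly $\ell(T)$; without that step there is no upper bound. For (1) your width argument against $\mathbf{j} = e_1 + \cdots + e_m$ only controls the summands $S_i$ with $\mathbf{j}\cdot u_i \neq 0$, and, as you anticipate, flat summands really do occur with multiplicity: for instance $[(0,2),(2,0)] \subset 2\Delta$ is a Minkowski sum of two primitive segments in direction $e_1 - e_2$, both contributing $0$ to the $\mathbf{j}$-width, so the naive bound $r \le t$ does not follow from this functional alone. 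Your plan to group the flat summands, project into a hyperplane slice, and induct on $m$ is a reasonable program, but the delicate point --- that the budget left over after accounting for the non-flat summands dominates the inductive contribution from the flat ones --- is precisely what would need to be verified, and it is not carried out. As submitted, the proposal proves (2) but leaves the upper bounds in (1) and (3) open, so it does not establish the proposition.
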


This still leaves large families of ``simple'' polytopes without explicit formulas for their Minkowski lengths, we seek to fill some of this gap.  To begin, we note a few basic properties of the Minkowski length.
\begin{proposition}\cite[Proposition 1.2]{SS1}
    Let $P,P_1, P_2, Q \subset \RR^{m}$ be lattice polytopes.
    \begin{enumerate}
        \item $L(P)$ is $\operatorname{AGL}(m,\ZZ)$-invariant.
                \item $L(P) \ge 1$ if and only if $\dim P > 0$.
        \item If $P_1+P_2 \subseteq P$, then $L(P_1) + L(P_2) \le L(P)$.
        \item If $Q$ is maximal for $P$, then $Q$ contains a zonotope $Z$ maximal for $P$.  
    \end{enumerate} 
\end{proposition}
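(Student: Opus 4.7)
The plan is to dispatch the four items in order, since they are largely independent and elementary consequences of the definitions.

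For part (1), I would simply point back to the argument given in the paragraph immediately preceding the definition of lattice equivalence: that paragraph already shows $L(P)\le L(T(P))$ and $L(T(P))\le L(P)$ for any $T\in\operatorname{AGL}(m,\ZZ)$, so there is nothing new to prove. I would write the invariance as a one-line reference.

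For part (2), I would split into the two directions. If $\dim P=0$, then $P$ is a single lattice point, hence contains no nontrivial polytope, so by definition $L(P)=0$. Conversely, if $\dim P\ge 1$, then $P$ contains at least two distinct lattice points $u,v$, and the line segment $\conv\{u,v\}\subseteq P$ by convexity; subdividing this segment at consecutive lattice points produces a primitive segment inside $P$, which is a nontrivial polytope, so $L(P)\ge 1$.

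For part (3), I would concatenate maximal decompositions. Write $Q^{(1)}_1+\cdots+Q^{(1)}_{L(P_1)}\subseteq P_1$ and $Q^{(2)}_1+\cdots+Q^{(2)}_{L(P_2)}\subseteq P_2$ with each summand nontrivial. Adding these Minkowski sums and using $P_1+P_2\subseteq P$ gives a single Minkowski sum of $L(P_1)+L(P_2)$ nontrivial polytopes inside $P$, so $L(P_1)+L(P_2)\le L(P)$.

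For part (4), the idea is that replacing each factor in a maximal decomposition by a primitive segment inside it gives a zonotopal maximal decomposition. Given $Q=Q_1+\cdots+Q_{L(P)}\subseteq P$ maximal, each $Q_i$ is nontrivial hence by (2) contains at least one primitive segment $S_i\subseteq Q_i$. Set $Z=S_1+\cdots+S_{L(P)}\subseteq Q$. Then $Z$ is a zonotope, and it is a Minkowski sum of $L(P)$ nontrivial polytopes contained in $P$; combined with $L(Z)\le L(P)$ (since $Z\subseteq P$) this forces $L(Z)=L(P)$, so $Z$ is maximal for $P$. The only step that requires any care is verifying that each $Q_i$ actually contains a primitive segment, but this is exactly part (2) applied to $Q_i$, so (2) should be proved before (4). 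I do not anticipate a real obstacle; the main thing is to order the four items so that (2) precedes (4) and to be careful with the definition of ``maximal for $P$'' in (4), namely that $Z$ must be both contained in $P$ and expressible as a Minkowski sum of $L(P)$ nontrivial polytopes.
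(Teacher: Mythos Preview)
Your proposal is correct, and each of the four arguments is the standard elementary one. Note, however, that the paper does not actually supply its own proof of this proposition: it is stated with a citation to \cite[Proposition 1.2]{SS1} and no proof environment follows. The only piece the paper does argue in-text is part (a), in the paragraph preceding the definition of lattice equivalence, and your treatment of (a) rightly points back to that paragraph. For (b)--(d) there is nothing in the paper to compare against; your arguments are sound and would serve as a self-contained proof if one were desired.
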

Furthermore, Soprunov and Soprunova offer a complete classification of polytopes in $\RR^{2}$ with Minkowski length equal to 1, equivalently all polytopes which can be summands in a maximal decomposition (up to lattice equivalence).
\begin{theorem}\cite[Theorem 1.4]{SS1}
    If $P \subset \RR^{2}$ is a lattice polytope with $L(P)=1$ then $P$ is a primitive line segment, $P \approx \Delta$, or $P \approx \exc$, where $\exc = \conv \{ (0,0),(1,2), (2,1)\}$ denotes the exceptional triangle (see Figure \ref{fig:exc-tri-simplex}).   
\end{theorem}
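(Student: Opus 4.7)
The plan is to recast the hypothesis $L(P)=1$ as a combinatorial condition on the lattice points of $P$. The Minkowski sum of two primitive lattice segments is either a segment of lattice length $2$ (when they are parallel) or a lattice parallelogram (when they are not); since $P$ is convex, it contains such a sum precisely when it contains its four lattice vertices. So $L(P)=1$ is equivalent to the two conditions: no three lattice points of $P$ are collinear (i.e.\ $\ell(P)=1$), and no four lattice points of $P$ lie in parallelogram position. These two conditions drive the entire argument.

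If $\dim P \leq 1$, then $P$ is a lattice segment of lattice length $n$, whose Minkowski length equals $n$, so the first condition forces $P$ to be primitive. Assume now $\dim P = 2$. Since $\ell(P)=1$, every edge of $P$ is primitive, so the boundary lattice points of $P$ are exactly its vertices. I would then show $\#P \leq 4$: any five lattice points in $\ZZ^{2}$ with no three collinear must contain four in parallelogram position, which I would verify by a case analysis on the convex-hull type of a putative five-point configuration (pentagon, quadrilateral with one interior lattice point, or triangle with two interior lattice points), using Pick's formula to locate any interior points and then exhibiting the required parallelogram.

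The next step is to rule out the case that $P$ is a quadrilateral. If $P$ has four vertices and $\#P = 4$, Pick's formula gives $\operatorname{Area}(P)=1$. Normalizing one vertex to the origin and splitting $P$ along a diagonal produces two empty lattice triangles of area $\frac{1}{2}$, so the remaining vertices $v_{2}, v_{3}, v_{4}$ satisfy $\det(v_{2}, v_{3}) = \det(v_{3}, v_{4}) = 1$. Writing $v_{3}$ in the $\ZZ$-basis $\{v_{2}, v_{4}\}$ then forces $v_{3} = v_{2} + v_{4}$, so $P$ is a parallelogram, contradicting the no-parallelogram condition. Therefore $P$ must be a lattice triangle $T$ with $\#T \in \{3,4\}$.

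Finally, by Proposition \ref{prop:SSMinkResults}(3) we have $\ell(T) = L(T) = 1$. After applying an element of $\operatorname{AGL}(2,\ZZ)$, normalize $T = \conv\{(0,0), (1,0), (a,b)\}$ with $b>0$ and $\gcd(a,b) = \gcd(a-1,b) = 1$. Pick's formula then gives $b = 2I + 1$, where $I$ is the number of interior lattice points, and $\#T \leq 4$ forces $I \in \{0,1\}$. The case $I=0$ (so $b=1$) yields $T \approx \Delta$ via a shear in $y$, and the case $I=1$ (so $b=3$ and $a \equiv 2 \pmod{3}$) yields $T \approx \exc$ via an explicit element of $\operatorname{AGL}(2,\ZZ)$. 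The main obstacle is the bound $\#P \leq 4$: ruling out five-point lattice configurations with no three collinear and no parallelogram requires a careful case analysis across the possible convex-hull types, and is combinatorially the most delicate step of the argument.
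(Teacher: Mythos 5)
The paper does not actually prove this theorem; it cites it to Soprunov and Soprunova (Theorem 1.4 of their paper), so there is no in-paper argument to compare against and your proposal is a reconstruction from scratch. The overall architecture is a reasonable one: recasting $L(P)=1$ as ``no three collinear lattice points of $P$ and no four lattice points of $P$ in parallelogram position'' is correct; the quadrilateral elimination (Pick gives area $1$, each diagonal splits $P$ into two empty triangles, so $\{v_{2},v_{4}\}$ is a $\ZZ$-basis and convexity forces $v_{3}=v_{2}+v_{4}$) is correct; and the triangle normalization to $\conv\{(0,0),(1,0),(a,b)\}$ with $b=2I+1$ from Pick correctly yields $P \approx \Delta$ when $I=0$ and $P \approx \exc$ when $I=1$.

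The gap is precisely where you flag the ``most delicate'' step. The auxiliary claim that any five lattice points in $\ZZ^{2}$ with no three collinear must contain four in parallelogram position is \emph{false}: the five points $(0,0),(1,0),(0,1),(3,1),(1,3)$ have no three collinear, yet all ten pairwise sums are distinct, so no four of them are vertices of a parallelogram. What rescues the intended application is that the convex hull of five such points can acquire an \emph{additional} lattice point --- here $(2,2)$ lies on the segment from $(3,1)$ to $(1,3)$, producing a collinear triple inside the hull --- but your three-way case analysis (pentagon, quadrilateral with one interior point, triangle with two interior points) tacitly assumes the five chosen points exhaust the lattice points of their hull, which is an unjustified extra hypothesis. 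Fortunately, the bound $\#P \leq 4$ does not need parallelograms at all: if $P$ contains five lattice points, two of them agree modulo $2$, their midpoint is a lattice point of $P$ by convexity, and this gives three collinear lattice points of $P$, contradicting $\ell(P)=1$. Substituting this parity argument for the false lemma, the remainder of your proof goes through as written.
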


\begin{figure}[ht]
    \centering
    \begin{tikzpicture}[scale=1.25]
        \draw[line width=.4mm] (0,0) -- (2,1) -- (1,2) -- (0,0);
        \draw[fill] (0,0) circle (.75mm);
        \draw[fill] (1,1) circle (.75mm);
        \draw[fill] (2,1) circle (.75mm);
        \draw[fill] (1,2) circle (.75mm);
        \draw[anchor=north] node at (1,-.1) {\Large $\exc$};

        \draw[line width=.4mm] (4,0) -- (5,0) -- (4,1) -- (4,0);
        \draw[fill] (4,0) circle (.75mm);
        \draw[fill] (4,1) circle (.75mm);
        \draw[fill] (5,0) circle (.75mm);
        \draw[anchor=north] node at (4.5,-.1) {\Large $\Delta$};
    \end{tikzpicture}
    \caption{Exceptional Triangle ($\exc$) and 2-Simplex ($\Delta$)}
    \label{fig:exc-tri-simplex}
\end{figure}
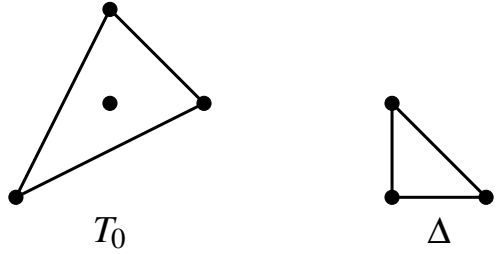

With these foundational facts, we begin to investigate Minkowski length and how polytope operations affect the Minkowski length.

\subsection{Polytopes in $\RR^2$}
A natural place to begin is with the class of polytopes in $\RR^{2}$ that contains all polytopes which are lattice equivalent to smallest maximal decompositions.
In \cite[Proposition 3.1]{SS1}, it was shown that any smallest maximal decomposition is lattice equivalent to \[
    Z = m[0,{e}_{1}] + n[0,{e}_{2}] + \ell[0,{e}_{1}+{e}_{2}]
\]
for some integers $m,n,\ell \ge 0$.
When $Z$ is a maximal decomposition in some other polytope, then $L(Z)=m+n+\ell$ by definition.
However, we do not know that $Z$ is a maximal decomposition in some other polytope for arbitrary $m,n,\ell$.
To begin, we prove a lemma relating the number of integral boundary points of a polytope to those of its subpolytopes.
To establish this relation, we construct an injective map from the integral boundary points of a subpolytope to those of the polytope containing it. We denote by $\partial Q$, the integral boundary points of $Q$.

\begin{lemma} \label{lemma:boundary-pt-ineq}
    Let $P \subset \RR^2$ be an integral convex polytope and $Z \subseteq P$ a subpolytope. 
    Suppose that $P \approx m[0, {e}_1] + n[0, {e}_2] + \ell[0, {e}_1 + {e}_2] + s[0, {e}_{1} - {e}_{2}] + r\Delta = Q$ then we have that $\#\partial Z \le \#\partial P$.
\end{lemma}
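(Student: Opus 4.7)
My plan is to replace the geometric comparison $\#\partial Z \le \#\partial P$ by a comparison of two \emph{diagonal lattice widths} that are adapted to the edge directions of $Q$. First I would use the $\operatorname{AGL}(2,\ZZ)$-invariance of the boundary lattice-point count to reduce to the case $P = Q$. Then I would check by a direct inspection of the Minkowski summands that every edge of $Q$ is parallel to one of the four primitive lattice vectors $e_1$, $e_2$, $e_1 + e_2$, or $e_1 - e_2$ -- in particular the hypotenuse of $r\Delta$ has edge vector $(-r,r)$, which is parallel to $e_1 - e_2$.

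Next, for any lattice polygon $R \subset \RR^2$, I would introduce the widths
\[
    w_+(R) := \max_{p \in R}(x+y) - \min_{p \in R}(x+y), \qquad w_-(R) := \max_{p \in R}(x-y) - \min_{p \in R}(x-y).
\]
For any $2$-dimensional convex lattice polygon $R$ with edge vectors $\xi_i = (a_i, b_i)$ oriented counterclockwise (so $\sum_i \xi_i = 0$), the linear functionals $x+y$ and $x-y$ each attain their extrema exactly once along $\partial R$, which immediately gives $\sum_i |a_i + b_i| = 2 w_+(R)$ and $\sum_i |a_i - b_i| = 2 w_-(R)$. Combining these identities with the elementary identity $|a+b| + |a-b| = 2 \max(|a|, |b|)$, the trivial bound $\max(|a|, |b|) \ge \gcd(|a|, |b|)$, and the standard count $\#\partial R = \sum_i \gcd(|a_i|, |b_i|)$ yields the general inequality $\#\partial R \le w_+(R) + w_-(R)$.

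The crucial observation for the polytope at hand is that when $R = P$ has edges only in the four special directions, a short case-by-case check shows that $\max(|a_i|, |b_i|) = \gcd(|a_i|, |b_i|)$ for every edge of $P$, so the general inequality becomes an equality: $\#\partial P = w_+(P) + w_-(P)$. Since $Z \subseteq P$ trivially gives $w_\pm(Z) \le w_\pm(P)$, I would conclude
\[
    \#\partial Z \le w_+(Z) + w_-(Z) \le w_+(P) + w_-(P) = \#\partial P.
\]

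The main obstacle I expect is identifying the correct functionals: the diagonals $x \pm y$ (rather than the individual coordinates $x,y$) are forced on us by the presence of the edge directions $e_1 \pm e_2$ in $Q$, and this choice is precisely what makes the inequality tight for $P$. Once that piece of bookkeeping is in place, the argument reduces to elementary properties of absolute values and greatest common divisors, and the ``injective map'' intuition is realized by charging each boundary lattice point of $Z$ against one unit of $w_+(Z) + w_-(Z)$, which is in turn bounded above by $\#\partial P$.
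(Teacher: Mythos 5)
Your approach is correct in essence and takes a genuinely different route from the paper. The paper's proof constructs an explicit injective map $f : \partial Z \to \partial Q$ by sending each boundary lattice point of $Z$ to an extremal lattice point of $Q$ on the same horizontal or vertical slice, and the bulk of the work is a delicate case analysis verifying that this assignment is well defined and injective. Your argument instead repackages the count as a comparison of diagonal lattice widths: since every edge vector $(a_i,b_i)$ of $Q$ lies in one of the directions $\pm e_1, \pm e_2, \pm(e_1+e_2), \pm(e_1-e_2)$, one has $\gcd(|a_i|,|b_i|) = \max(|a_i|,|b_i|)$ on every edge, and the chain $\#\partial R = \sum_i \gcd(|a_i|,|b_i|) \le \sum_i \max(|a_i|,|b_i|) = \tfrac12 \sum_i\bigl(|a_i+b_i| + |a_i-b_i|\bigr) = w_+(R) + w_-(R)$ becomes an equality for $R = Q$. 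Combined with the trivial monotonicity $w_\pm(Z) \le w_\pm(P)$, this finishes the proof in a few lines and makes transparent exactly what is special about this class of polytopes: the four admissible edge directions are exactly those on which the two diagonal widths saturate. This is shorter and more conceptual than the paper's construction, and avoids the case analysis entirely.

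The one piece of bookkeeping you should still carry out explicitly is the degenerate cases. Your key inequality $\#\partial Z \le w_+(Z) + w_-(Z)$ is derived from the edge-vector identity, which presumes $\dim Z = 2$. For a $1$-dimensional $Z$ with displacement vector $(a,b)$ and $\gcd(|a|,|b|)=k$, one has $\#\partial Z = k+1$ while $w_+(Z)+w_-(Z) = 2\max(|a|,|b|) \ge 2k \ge k+1$, so the inequality survives. But for a single point $Z$ the claimed bound reads $1 \le 0$ and fails; that case must be dispatched directly by noting $\#\partial Z = 1 \le \#\partial P$ since $P$ is nonempty. Likewise, if $\dim Q \le 1$ the edge-vector computation of $\#\partial Q$ does not apply, but then $Z$ is also at most $1$-dimensional and the statement is immediate. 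With those two small additions your argument is complete and, in my view, preferable.
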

\begin{proof}
    Since lattice equivalent polytopes have the same number of integral boundary points, it suffices to show that $\# \partial Q \ge \# \partial Z$ for any subpolytope $Z \subseteq Q$.
    To prove this inequality, we will construct an injective map $f: \partial Z \to \partial Q $.
    Note that any $x$-slice or $y$-slice of $\partial Q$ contains an integral point when $0 \le x \le m+\ell+s+r$, $-s \le y \le n+\ell+r$, and $x,y \in \ZZ$.
    So, for appropriate $x$-slices and $y$-slices, there exists an integral point of $\partial Q$ with minimal/maximal $x$-coordinate/$y$-coordinate.
    Then, for $(a,b) \in \partial Z$,  \begin{enumerate}
        \item[(i)] Let ${m}$ denote the point with minimal $y$-coordinate and ${M}$ denote the point with maximal $y$-coordinate on $\partial Q \cap \left\{ x=a \right\}$.
        If $b$ is the minimal (resp. maximal) $y$-coordinate of $\partial Z \cap \left\{ x=a \right\}$ then $f$ maps $(a,b)$ to ${m}$ (resp. ${M}$).
        Should $b$ be both the minimal and maximal $y$-coordinate, $f$ maps $(a,b)$ to the point ${m}$ if $[(a,b),{m}] \cap \interior(Z) = \emptyset$ or to ${M}$ if $[(a,b),{M}] \cap \interior(Z) = \emptyset$.
        If both $[(a,b),{m}] \cap \interior(Z) = [(a,b),{M}] \cap \interior(Z) = \emptyset$, then $f$ maps $(a,b)$ to ${m}$.

        \item[(ii)] If $b$ is neither minimal or maximal, skip this point.
    \end{enumerate}
    
    After this procedure has concluded, we are only left with $(a,b) \in \partial Z$ such that $b$ is neither the minimal nor maximal $y$-coordinate of \[ 
        S_{x} = \partial Z \cap \left\{ x=a \right\}. 
    \]
    In this case, we claim that $a$ is the minimal or maximal $x$-coordinate on \[ 
        S_{y} = \partial Z \cap \left\{ y=b \right\}. 
    \]
    Assume towards contradiction that $a$ is neither the minimal nor maximal $x$-coordinate on $S_y$ then there exist $(a_{0},b),(a_{1},b) \in S_{y}$ so that $a_{0} < a < a_{1}$.
    Moreover, there must have existed $(a,b_{0}),(a,b_{1}) \in S_{x}$ so that $b_{0} < b < b_{1}$ since $b$ is neither the minimal nor maximal $y$-coordinate on $S_{y}$.
    But, $(a,b)$ is an interior point of \[
        \conv \left\{ (a,b_{0}),(a,b_{1}),(a_{0},b),(a_{1},b) \right\} \subseteq Z
    \]
    which implies that $(a,b)$ is an interior point of $Z$, which is a contradiction.    Therefore, $a$ must either the minimal or maximal $x$-coordinate on $S_{y}$.

    Let ${m}$ be the point with minimal $x$-coordinate and ${M}$ the point with maximal $x$-coordinate on $\partial Q \cap \left\{ y=b \right\}$.
    If $a$ is minimal (resp. maximal) then $f$ maps $(a,b)$ to ${m}$ (resp. ${M}$).
    Should $a$ be both the minimal and maximal $x$-coordinate of $S_{y}$, then $f$ maps $(a,b)$ to ${m}$ if $[(a,b),{m}] \cap \interior(Z) = \emptyset$ or ${M}$ if $[(a,b),{M}] \cap \interior(Z) = \emptyset$.
    If both $[(a,b),{m}] \cap \interior(Z) = [(a,b),{M}] = \emptyset$, then $f$ maps $(a,b)$ to ${m}$.
    Note that every element of $\partial Z$ gets mapped by $f$.
    
    We further claim that $f$ is injective. 
    Let $(a_1,a_2), (b_1,b_2)\in \partial Z$ such that $f(a_1,a_2) = (c_1,c_2) = f(b_1,b_2)$.
    By construction of $f$, $(a_1,a_2)$ differs from $(c_1,c_2)$ in at most one coordinate and, similarly, $(b_1,b_2)$ differs from $(c_1,c_2)$in at most one coordinate. We now consider two cases:
    \begin{enumerate}
        \item[(i)] Suppose both $(a_1,a_2)$ and $(b_1,b_2)$ differ from $(c_1,c_2)$ in the same coordinate, say $a_{1}=b_{1}=c_{1}$.
        So both points are contained in \[ 
            A = \partial Z \cap \left\{ x=c_{1} \right\} 
        \]
        which implies $a_{2}$ and $b_{2}$ must be either minimal or maximal $y$-coordinates on $A$.
        Since both points are mapped to $(c_1,c_2)$ which must have minimal (resp. maximal) $y$-coordinate on \[
            \partial Q \cap \left\{ x=c_{1} \right\},
        \]
        $a_{2}$ and $b_{2}$ must also be minimal (resp. maximal) $y$-coordinates on $A$.
        Since the point with minimal (resp. maximal) $y$-coordinate on $A$ is unique, we can conclude that $(a_1,a_2)=(b_1,b_2)$.

        \item[(ii)] Alternatively, suppose $a = (a_1,a_2)$ and $b = (b_1,b_2)$ differ from $c = (c_1,c_2)$ in different coordinates, say $a_{1} = c_{1}$ and $b_{2} = c_{2}$ so there must exist $(b_{1},d_{0}),(b_{1},d_{1}) \in \partial Z$ such that $d_{0} < b_{2} < d_{1}$.
        But, this implies that the triangle \[
            T = \conv \left\{ (a_1,a_2), (b_{1},d_{0}), (b_{1},d_{1}) \right\} \subseteq Z
        \]
        contains $(b_1,b_2)$ as boundary point (see Figure \ref{fig:boundary-pt-map-inj}).
        We assume that $c_1 < b_1$ as an identical argument (exchanging the terms minimal and maximal) holds when $b_1 < c_1$.
        We see that $b_1$ cannot be the minimal but not maximal $x$-coordinate of $\partial Z \cap \left\{ y=b_2 \right\} $ since $(b_1,b_2)$ would then be an interior point of $Z$.
        Similarly, $b_1$ cannot be the maximal but not minimal $x$-coordinate of $\partial Z \cap \left\{ y=b_2 \right\}$ as then $c$ would be both minimal and maximal on $\partial Q \cap \left\{ y=b_2 \right\}$.
        However, we'd then have $(b_1,b_2) \notin Q$ contradicting the fact that $Z \subseteq Q$.

        This leaves us only the case when $b_1$ is both the minimal and maximal $x$-coordinate on $\partial Z \cap \left\{ y=b_2 \right\}$. 
        Note the line segment $[(b_1,b_2),(c_1,c_2)]$ intersects either the line segment $[(a_1,a_2),(b_{1},d_{0})]$ or $[(a_1,a_2),(b_{1},d_{1})]$ so $[(b_1,b_2),(c_1,c_2)] \cap \interior(T) \neq \emptyset$ if $a_{1} \neq b_{1}$.
        Since $\interior(T) \subseteq \interior(Z)$, we must have that $a_{1} = b_{1}$ which gives $[(b_1,b_2),(c_1,c_2)] \cap \interior(Z) = \emptyset$.
        However, this contradicts our assumption that $(a_1,a_2)$ and $(b_1,b_2)$ differ from $(c_1,c_2)$ in different coordinates. 
        Therefore, this can never happen, so we are left with only case (i).
    \end{enumerate}
    Thus, we conclude that $f$ is injective as desired.
    \begin{figure}
        \includegraphics[scale=.2]{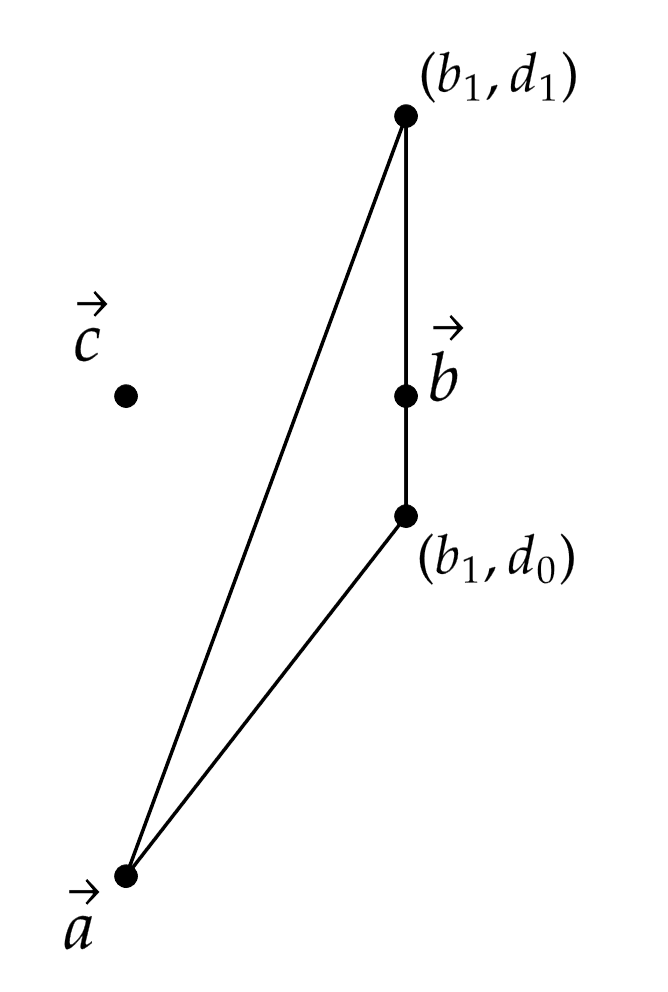}
        \caption{$T$ from case {\bf (ii)} of Lemma \ref{lemma:boundary-pt-ineq}}
        \label{fig:boundary-pt-map-inj}
    \end{figure}
\end{proof}

With Lemma \ref{lemma:boundary-pt-ineq}, we can deduce the Minkowski length of more polytopes than those we were initially interested in.
\begin{proposition} \label{prop:quad-clipped-rect-minkowski}
    Let $P \subset \RR^2$ be an integral convex polytope which is lattice equivalent to \[
       Q = m[0, {e}_1] + n[0, {e}_2] + \ell[0, {e}_1 + {e}_2] + s[0, {e}_{1} - {e}_{2}] + r\Delta,
    \]
    for some integers $m,n,\ell, s, r \geq 0.$
    Then, \[
        L(P) = m+n+s+\ell+\begin{cases}
            \left\lfloor \frac{3r}{2} \right\rfloor & \text{if $r < 2\ell$} \\
            \ell+r & \text{if $r \ge  2\ell$}
        \end{cases}.
    \]
\end{proposition}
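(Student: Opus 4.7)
My plan is to match lower and upper bounds for $L(Q)$ (equal to $L(P)$ by the $\operatorname{AGL}(2,\ZZ)$-invariance of $L$). The upper bound will be established by two complementary arguments — the boundary-point Lemma \ref{lemma:boundary-pt-ineq} for the case $r \le 2\ell$, and a ``triangle functional'' for the case $r \ge 2\ell$ — which overlap at $r = 2\ell$.

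For the lower bound, let $k = \min(\ell,\lfloor r/2\rfloor)$ and set
\[
Q' \;=\; (m+2k)[0,e_1] + (n+2k)[0,e_2] + (\ell-k)[0,e_1+e_2] + s[0,e_1-e_2] + (r-2k)\Delta.
\]
Splitting $(r-2k)\Delta$ into $r-2k$ copies of $\Delta$, which is permissible since $L(t\Delta) = t$ by Proposition \ref{prop:SSMinkResults}(1), this is a decomposition of $Q'$ into $(m+2k)+(n+2k)+(\ell-k)+s+(r-2k) = m+n+\ell+s+k+r$ primitive summands. Plugging in the two choices of $k$ shows this equals the claimed formula in both regimes. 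For the containment $Q' \subseteq Q$, Minkowski cancellation reduces the problem to
\[
2k[0,e_1] + 2k[0,e_2] \;\subseteq\; k[0,e_1+e_2] + 2k\Delta,
\]
i.e.\ showing the square $[0,2k]^2$ sits inside the pentagon with vertices $(0,0),(2k,0),(3k,k),(k,3k),(0,2k)$; this holds because $(2k,2k)$ is the midpoint of the edge from $(3k,k)$ to $(k,3k)$. Hence $L(Q) \ge L(Q') \ge m+n+\ell+s+k+r$.

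For the upper bound, Proposition 1.2(d) lets me assume $L(Q) = N$ is attained by a maximal zonotope $Z \subseteq Q$. A direct edge-by-edge computation on the Minkowski-sum description of $Q$ gives $\#\partial Q = 2m + 2n + 2\ell + 2s + 3r$, and when $Z$ is two-dimensional with $N$ primitive segments (counted with multiplicity), summing lattice points on pairs of parallel edges and correcting for corners yields $\#\partial Z = 2N$. Lemma \ref{lemma:boundary-pt-ineq} then gives $N \le m+n+\ell+s+\lfloor 3r/2\rfloor$, which matches the formula when $r \le 2\ell$. (A one-dimensional zonotope is controlled directly by the lattice width of $Q$ in its direction, which is easily checked to be at most the same quantity.) For $r \ge 2\ell$, I introduce the \emph{triangle functional}
\[
T(P) \;:=\; \max_{x \in P}(x_1+x_2) \;-\; \min_{x\in P} x_1 \;-\; \min_{x \in P} x_2,
\]
equivalently the smallest $k \ge 0$ so that $P$ fits, via translation, inside $k\Delta$. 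The functional $T$ is translation-invariant, Minkowski-additive, and monotone under inclusion, and evaluating it on the summands of $Q$ gives $T(Q) = m+n+2\ell+s+r$. Moreover, every nontrivial lattice polytope $P_i$ satisfies $T(P_i) \ge 1$, since $T(P_i) = 0$ forces $x_1$, $x_2$, and $x_1+x_2$ each to be constant on $P_i$, making $P_i$ a single point. Hence, for any maximal decomposition $P_1 + \cdots + P_N \subseteq Q$,
\[
N \;\le\; \sum_i T(P_i) \;=\; T(P_1+\cdots+P_N) \;\le\; T(Q) \;=\; m+n+2\ell+s+r,
\]
which matches the formula when $r \ge 2\ell$. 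Combining yields $L(Q) \le \min(m+n+\ell+s+\lfloor 3r/2\rfloor,\, m+n+2\ell+s+r)$, equal to the stated formula. The main obstacle is recognizing that the boundary-point bound alone is insufficient when $r > 2\ell$ and identifying the triangle functional as the correct supplementary tool; once $T$ is in place, both its additivity and the per-summand bound $T(P_i) \ge 1$ are short checks.
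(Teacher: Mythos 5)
Your proof is correct, and it takes a cleaner route than the paper's in a couple of places, so let me compare. For the lower bound, the paper splits into two cases: when $r<2\ell$ it exhibits the zonotope $Z_0 = (m+r)[0,e_1]+(n+r)[0,e_2]+\lfloor\ell-r/2\rfloor[0,e_1+e_2]+s[0,e_1-e_2]$ inside $Q$, and when $r\geq2\ell$ it shows $(m+2\ell)[0,e_1]+(n+r)[0,e_2]+s[0,e_1-e_2]\subseteq Q$. Your unified construction $Q'$ with $k=\min(\ell,\lfloor r/2\rfloor)$, which retains a $(r-2k)\Delta$ summand, interpolates between these and avoids the case split; the reduction to $[0,2k]^2\subseteq k[0,e_1+e_2]+2k\Delta$ via Minkowski cancellation is valid (cancellation is legitimate here since you only use the forward implication $A\subseteq B\Rightarrow A+C\subseteq B+C$), and the containment of the square in that pentagon is correct. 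For the upper bound when $r\leq2\ell$ both proofs use Lemma \ref{lemma:boundary-pt-ineq} with the count $\#\partial Q = 2(m+n+\ell+s)+3r$; your handling of the one-dimensional $Z$ case via lattice width is terser than the paper's (which explicitly bounds a segment by the horizontal or vertical extent of $Q$ depending on the direction), but the argument you're gesturing at is the same. For $r\geq2\ell$ the paper shows $(0,s)+Q\subseteq(m+n+s+r+2\ell)\Delta$ and cites $L(t\Delta)=t$; your ``triangle functional'' $T(P)=\max_P(x_1+x_2)-\min_P x_1-\min_P x_2$ is exactly the minimal dilation factor $k$ with $P$ fitting in a translate of $k\Delta$, so the two arguments are equivalent in content — but phrasing it as a translation-invariant, Minkowski-additive, monotone functional with $T(P_i)\geq1$ on nontrivial summands is self-contained and arguably more transparent (it effectively re-proves $L(t\Delta)=t$ inline rather than quoting it). Your observation that the two upper bounds agree at $r=2\ell$ and that the minimum of the two reproduces the claimed formula is correct. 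The only thing I'd flag is that the one-dimensional case of the upper bound for $r\leq2\ell$ deserves a sentence of justification (e.g., if the segment direction has zero $y$-component then $n_1\leq m+\ell+s+r$, otherwise $n_1\leq n+\ell+s+r$, both of which are $\leq m+n+\ell+s+\lfloor 3r/2\rfloor$), as ``easily checked'' leaves a small gap.
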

\begin{proof}
    \begin{enumerate}
        \item[(1)] Suppose that $r < 2\ell$ (see Figure \ref{fig:quad-clipped-rect-l}).
        Note that \[ Z_0 = (m+r)[0, {e}_1] + (n+r)[0, {e}_2] + \left\lfloor \ell - \frac{r}{2} \right\rfloor[0, {e}_1+{e}_2] + s[0,{e}_{1}-{e}_{2}] \subseteq Q \approx P \] and $L(Z_0) \ge m+n+s+2r+\left\lfloor \ell - \frac{r}{2} \right\rfloor = m+n+s+\ell+\left\lfloor \frac{3r}{2} \right\rfloor$ which gives us a lower bound on $L(P)$.
        Let $Z \subseteq P$ be a smallest maximal decomposition then $Z = {a} + n_1E_1 + n_2E_2 + n_3E_3 \subseteq P$ where $n_i \ge 0$, each $E_i$ is a primitive line segment, and $n_1+n_2+n_3 = L(P)$ which exists by \cite[Proposition 3.1]{SS1}.

        First, suppose that $\dim Z = 2$.
        Since $P$ satisfies the conditions of Lemma \ref{lemma:boundary-pt-ineq}, we have that $\#\partial P \ge \#\partial Z$.
        We know that \begin{align*}
            \#\partial P &= 3r+2(m+n+s+\ell) \\
            \#\partial Z &= 2(n_1+n_2+n_3) = 2L(P)
        \end{align*}
        which gives us that $m+n+\ell+s+3r/2 \ge L(P)$.
        Since $L(P)$ is integral, we further have that \[
            L(P) \le \left\lfloor m+n+s+\ell+\frac{3r}{2} \right\rfloor = m+n+s+\ell+\left\lfloor \frac{3r}{2} \right\rfloor. 
        \]

        Now, supposing that $\dim Z = 1$ and without loss of generality, we may write $Z = {b} + n_{1}[0, {v}]$ where ${b} \in Z$ has minimal $y$-coordinate.
        By choice of ${b}$, the $y$-coordinate of ${v}$ must be non-negative.
        If the $y$-coordinate of ${v}$ is $0$ then $n_{1} \le m+s+\ell+r \le m+n+s+\ell+\lfloor 3r/2 \rfloor$ as desired.
        Alternatively, if the $y$-coordinate of ${v}$ is at least 1 then $n_{1} \le n+s+\ell+r \le m+n+s+\ell+\lfloor 3r/2 \rfloor$ as desired.
        The upper bound established in both cases and the previously established lower bound gives us \[ 
            L(P) = m+n+s+\ell+\left\lfloor \frac{3r}{2} \right\rfloor.
        \]
        
        \item[(2)] Suppose that $r \ge 2\ell$ (see Figure \ref{fig:quad-clipped-rect-ge}). 
        We first establish that the zonotope \[ Z' = (m+2\ell)[0,{e_1}] + (n+r)[0, {e_2}] \subseteq m[0,{e}_{1}] + n[0,{e}_{2}] + \ell[0,{e}_{1}+{e}_{2}] + r\Delta = Q'. \]
        To do this, we just need to check that the points $(0,0), (0,n+r), (m+2\ell, 0),$ and $(m+2\ell, n+r)$ are in $Q'$.
        By definition of the Minkowski sum, we know that $(0,0), (0,n+r), (m+2\ell, 0) \in Q'$. 
       The point $(m+2\ell, n+r)$ is on the line $x+y=m+n+r+2\ell$ connecting $(m+r+\ell, n+\ell) \in Q'$ to $ (m+\ell, n+r+\ell) \in Q'$, so by convexity of $Q'$, $(m+2\ell, n+r) \in Q'$.
        Note that $Z' \subseteq Q'$ implies $Z = Z'+s[0,{e}_{1}-{e}_{2}] \subseteq Q' + s[0,{e}_{1}-{e}_{2}] = Q \approx P$.
        Next, we show that $(0,s) + Q \subseteq (m+n+s+r+2\ell)\Delta$. 
        Every point $(x,y) \in (0,s)+Q$ must satisfy $x, y \geq 0$ and $x+y \leq m+n+s+r+2\ell$ among other conditions. 
        These bounds describe $(m+n+s+r+2\ell)\Delta$ exactly, so $(0,s) + Q \subseteq (m+n+s+r+2\ell)\Delta$. 
        By \ref{prop:SSMinkResults}(a), the Minkowski length of this simplex will be $m+n+s+r+2\ell$. 
        From these inclusions, we have \[ m+n+s+r+2\ell = L(Z) \leq L(P) \leq L\left( (m+n+s+r+2\ell)\Delta \right) = m+n+s+r+2\ell. \]
        Therefore, $L(P)=m+n+s+r+2\ell$.
    \end{enumerate}
    \begin{figure}
        \centering
        \begin{subfigure}{0.4\textwidth}
            \includegraphics[width=\textwidth]{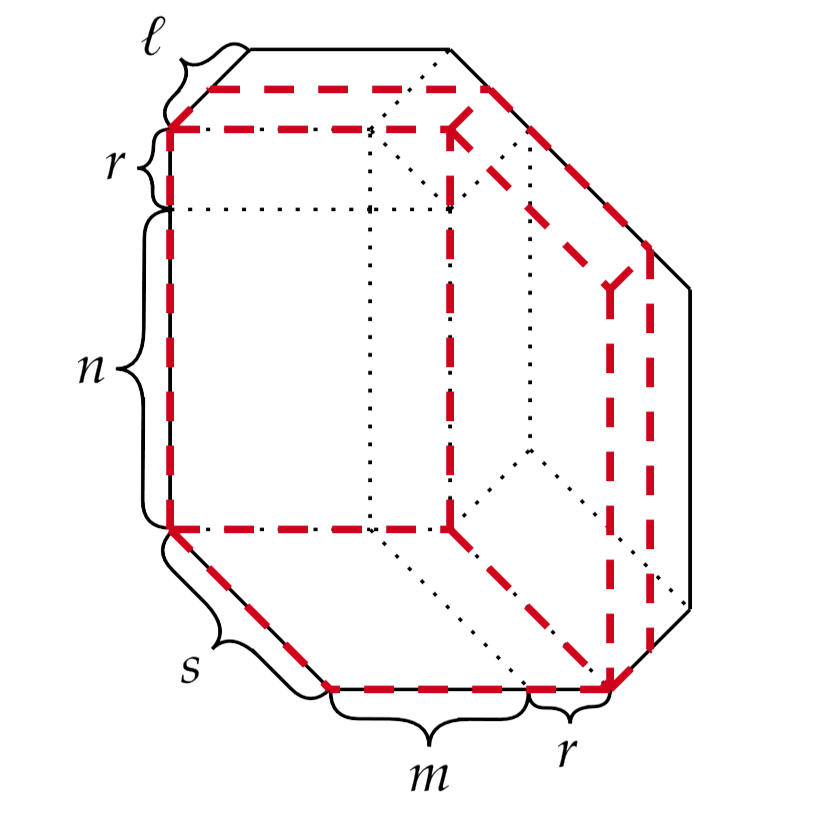}
            \caption{$r < 2\ell$}
            \label{fig:quad-clipped-rect-l}
        \end{subfigure}
        \begin{subfigure}{0.4\textwidth}
            \includegraphics[width=\textwidth]{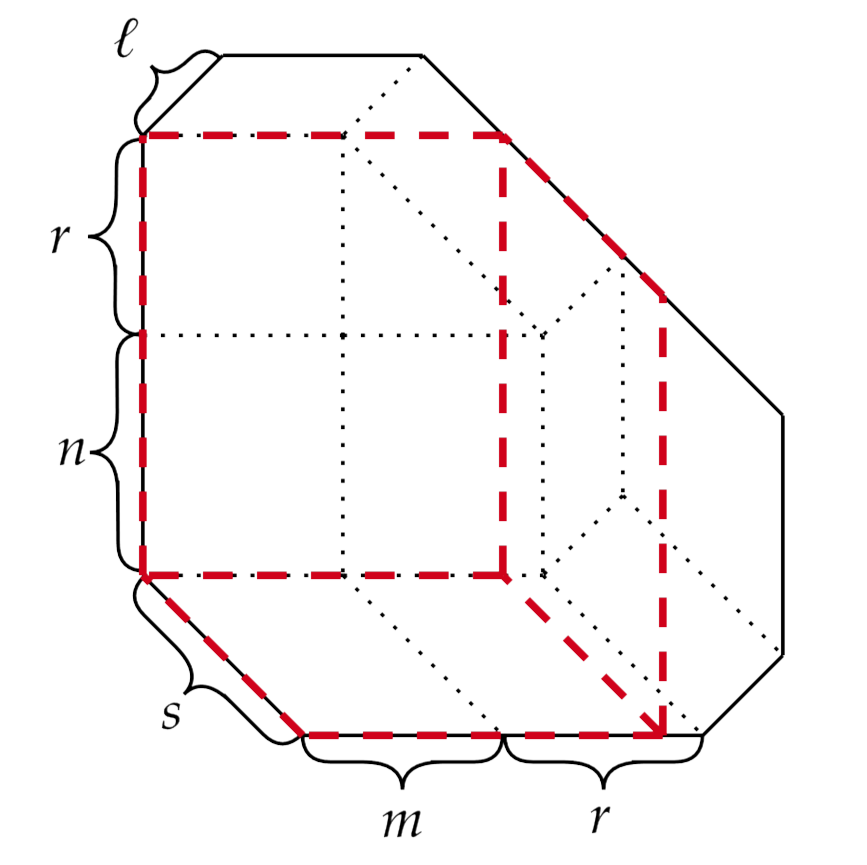}
            \caption{$r \ge 2\ell$}
            \label{fig:quad-clipped-rect-ge}
        \end{subfigure}
        \caption{$Q$ (black) and maximal decompositions (red) described in Proposition \ref{prop:quad-clipped-rect-minkowski}}
    \end{figure}
\end{proof}

\begin{corollary} \label{cor:smallest-zonotope-minkowski}
    Let $P \subset \RR^2$ be an integral convex polytope which is lattice equivalent to \[
      Q = m[0, {e}_1] + n[0, {e}_2] + \ell[0, {e}_1 + {e}_2]. 
   \]
   Then, $L(P) = m+n+\ell$.
\end{corollary}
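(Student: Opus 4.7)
The plan is to deduce this as a direct specialization of Proposition \ref{prop:quad-clipped-rect-minkowski}. The polytope $Q$ in the corollary is precisely the polytope appearing in that proposition under the parameter choice $s=0$ and $r=0$, so the claim should follow immediately by plugging in.

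The only thing I would need to check is that the piecewise formula from Proposition \ref{prop:quad-clipped-rect-minkowski} collapses to $m+n+\ell$ regardless of which branch of the case analysis is triggered when $r=0$. Concretely, if $\ell > 0$ then $r = 0 < 2\ell$, so the first branch applies and gives
\[
    L(P) = m + n + s + \ell + \left\lfloor \tfrac{3r}{2} \right\rfloor = m+n+0+\ell+0 = m+n+\ell.
\]
If instead $\ell = 0$, then $r = 0 \geq 2\ell = 0$ and the second branch applies, yielding
\[
    L(P) = m+n+s+r+2\ell = m+n+0+0+0 = m+n,
\]
which again equals $m+n+\ell$ since $\ell=0$. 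Both branches therefore agree, and the corollary follows.

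There is no real obstacle here; the substantive content lives in Proposition \ref{prop:quad-clipped-rect-minkowski} itself. The only mild subtlety is making sure the boundary case $\ell = 0$ (where the $[0,e_1+e_2]$ summand vanishes) is covered, but as shown above both branches of the case analysis produce the same value, so no separate argument is required.
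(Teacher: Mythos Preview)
Your proposal is correct and essentially identical to the paper's own proof: both specialize Proposition~\ref{prop:quad-clipped-rect-minkowski} with $s=r=0$, split into the cases $\ell>0$ (first branch) and $\ell=0$ (second branch), and observe that each yields $m+n+\ell$.
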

\begin{proof}
   Note that $Q$ is described by the polytope from the preceding proposition when $s = r = 0$. 
   In the case where $\ell > 0$, we have $r < 2\ell$. 
   Applying Proposition \ref{prop:quad-clipped-rect-minkowski} gives $L(Q) = m + n + \ell$. 
   When $\ell = 0$, we have $r \geq 2\ell$ which implies $L(Q) = m + n$. 
   Since $\ell=0$, we may write $L(Q) = m + n + \ell$. 
\end{proof}

\subsection{Periodicity}
In the preceding three corollaries, all the polytopes satisfy the property that $L(tP) = tL(P)$ for all $t \in \posint$.
However, this is not true for all polytopes --- for example the polytope in Proposition \ref{prop:quad-clipped-rect-minkowski} when $r < 2\ell$ and $2 \nmid r$, does not satisfy this condition.
For another example, consider the exceptional triangle $\exc = \conv\{(0,0),(1,2),(2,1)\}$. 
We have that $L(\exc) = 1$ and\begin{equation} \label{eq:scaled-exc-minkowski}
    L(t\exc) = t + \left\lfloor \frac{t}{2} \right\rfloor
\end{equation}
by \cite[Example 4.1]{SS3}.
In general, we can only say that \[
    L(tP) \ge tL(P).
\]

\begin{defn}
    Let $P \subset \RR^{m}$ be a lattice polytope.
    We say that $P$ has \emph{period 1} if $L(tP)=tL(P)$ for all $t \in \posint$.
    We say that $P$ has \emph{period strictly greater than 1} if $P$ is not a period 1 polytope.
\end{defn}
We note that this definition is equivalent to that given \cite[Definition 2.17]{SS3}. 
If $P$ has period 1 (in the sense of \cite[Definition 2.17]{SS3}) then \[
    \frac{L(1 \cdot P)}{1} = \sup_{t \in \posint} \frac{L(tP)}{t} \iff tL(P) \ge L(tP)
\]
for all $t \in \posint$.
So we have $tL(P)=L(tP)$ for all $t \in \posint$, and the two definitions are equivalent.

As we have seen, line segments are period $1$ polytopes and simplices are period $1$ polytopes which leaves the exceptional triangle as the only polytope which can appear in a maximal decomposition that has period greater than 1.
As such, we begin by investigating the periodicity of polytopes which contain the exceptional triangle as a summand in some maximal decomposition. 
\begin{proposition} \label{prop:period-1-nicity}
    Let $P \subset \RR^{2}$ be an integral convex polytope which has a maximal decomposition $Q \subseteq P$ containing the exceptional triangle as a summand, then $P$ has period strictly greater than 1. 
\end{proposition}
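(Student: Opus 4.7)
The plan is to exhibit a single value of $t$ (namely $t=2$) for which $L(tP) > tL(P)$, exploiting the fact that the exceptional triangle itself fails period $1$: by \eqref{eq:scaled-exc-minkowski}, $L(2\exc) = 3 > 2 = 2 L(\exc)$.

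First, I would decompose the hypothesis cleanly. Write the maximal decomposition as $Q = \exc + R \subseteq P$, where $R$ is the Minkowski sum of the remaining $L(P) - 1$ nontrivial summands. By definition of the Minkowski length, $L(R) \geq L(P) - 1$, while property (3) of Proposition 3.2 applied to $\exc + R = Q \subseteq P$ gives $L(\exc) + L(R) \leq L(P)$, so $L(R) \leq L(P) - 1$. Combining, $L(R) = L(P) - 1$.

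Next, I would scale. Since $Q \subseteq P$, we have $2Q = 2\exc + 2R \subseteq 2P$. Applying property (3) of Proposition 3.2 to the decomposition $2\exc + 2R \subseteq 2P$, together with the general inequality $L(tR) \geq t L(R)$ recorded just before the definition of period $1$, yields
\[
    L(2P) \;\geq\; L(2\exc) + L(2R) \;\geq\; 3 + 2L(R) \;=\; 3 + 2(L(P) - 1) \;=\; 2L(P) + 1.
\]
Thus $L(2P) > 2 L(P)$, so $P$ cannot have period $1$.

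There is no real obstacle here; the only thing to verify is that the decomposition $Q = \exc + R$ makes sense, i.e.\ that one may isolate $\exc$ as a single summand (rather than as a piece of a larger summand). This is guaranteed by the hypothesis that $\exc$ appears \emph{as a summand} in the maximal decomposition $Q$, together with the fact that a Minkowski sum of $L(P)$ nontrivial pieces can be regrouped as $\exc$ plus the sum of the other $L(P) - 1$ pieces. One small sanity check is that this argument only uses $t = 2$, but a completely analogous estimate gives $L(tP) \geq tL(P) + \lfloor t/2 \rfloor$ for every $t \in \posint$, recovering quantitatively the failure of period $1$.
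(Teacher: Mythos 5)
Your argument is correct and reaches the same conclusion by the same essential mechanism (the formula $L(t\exc)=t+\lfloor t/2\rfloor$ combined with superadditivity of Minkowski length over Minkowski sums), but your route is slightly more economical than the paper's. The paper invokes the classification result \cite[Theorem 1.6]{SS1} to write $Q \approx \exc + m[0,e_1]+n[0,e_2]+\ell[0,e_1+e_2]$ explicitly, and then uses the exact value $L(t\cdot\text{zonotope}) = t(m+n+\ell)$. You instead keep the complementary factor $R$ abstract, pin down $L(R)=L(P)-1$ by squeezing from both sides (the definition of $L$ gives $L(R)\geq L(P)-1$, Proposition 3.2(3) gives $L(R)\leq L(P)-1$), and then only need the general inequality $L(tR)\geq tL(R)$ rather than an exact computation. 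This avoids the appeal to the external classification theorem and makes the argument self-contained given the facts already recorded in Sections 2--3. Both approaches are short; yours trades one citation for a two-line squeeze. One small remark: the hypothesis is that $\exc$ appears as a summand up to lattice equivalence, so strictly speaking $Q = T' + R$ with $T'\approx\exc$; since $L$ is $\operatorname{AGL}(2,\ZZ)$-invariant and scaling commutes with lattice equivalence, $L(2T')=L(2\exc)=3$ and your computation goes through unchanged.
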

\begin{proof}
    We may write \[
        Q \approx \exc + m[0,{e}_{1}] + n[0,{e}_{2}] + \ell[0,{e}_{1}+{e}_{2}]
    \]
    by \cite[Theorem 1.6]{SS1}.
    Using Equation (\ref{eq:scaled-exc-minkowski}), we have \[
        L(tP) = L(tQ) \ge L(t\exc) + L(t(m[0,{e}_{1}] + n[0,{e}_{2}] + \ell[0,{e}_{1}+{e}_{2}])) = t(m+n+\ell+1) + \left\lfloor \frac{t}{2} \right\rfloor > tL(Q)
    \]
    when $t > 1$.
\end{proof}
While simple, this result will be useful when computing the minimum distance of period $1$ polytopes in the next section.

\section{Using maximal decompositions to compute minimum distances of toric surface codes} \label{sec:our-method}

Building on the results of Soprunov and Soprunova \cite{SS1}, we prove a correspondence between maximal decompositions and polynomials with the maximal number of zeros for certain polytopes and sufficiently large $q$.  This correspondence then allows us to compute the minimum distance  for these codes.

\begin{proposition} \label{prop:polytope-giving-max-zeros}
    Let $P \subseteq [0,q-2]^2 \subset \RR^2$ be an integral convex polytope which does not contain the exceptional triangle as a summand in any maximal decomposition, and let $Q \subseteq P$ be a smallest maximal decomposition in $P$ with minimal area. 
    If $g \in \polyspace_P$ has the maximum number of zeros and $g=g_1 \cdots g_L$, where each $g_i \in \polyspace_P$ is irreducible, then $L=L(P)$ for $q-1-m\sqrt{q} > \operatorname{Area}(Q)$, where $m=\max \left\{ 2\operatorname{Area}(P)-1,6 \right\}$.
\end{proposition}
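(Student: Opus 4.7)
The plan is a direct comparison of $|Z(g)|$ with $|Z(h)|$ for a polynomial $h$ assembled from the maximal decomposition $Q$. The inequality $L \leq L(P)$ is immediate from the definition of Minkowski length: since $g = g_{1} \cdots g_{L} \in \polyspace_{P}$, the sum $P_{g_{1}} + \cdots + P_{g_{L}} = P_{g}$ sits inside $P$, and each $P_{g_{i}}$ is nontrivial (as $g_{i}$ is irreducible, hence nonconstant), so $L \leq L(P)$. The substantive task is to rule out $L < L(P)$ under the hypothesis $q - 1 - m\sqrt{q} > \operatorname{Area}(Q)$.

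First I would construct the comparison polynomial. Write $Q = Q_{1} + \cdots + Q_{L(P)}$ as a maximal decomposition. By hypothesis no maximal decomposition of $P$ contains $\exc$ as a summand, so by the classification of Minkowski length $1$ polytopes in $\RR^{2}$ each $Q_{i}$ is lattice equivalent to a primitive line segment or $\Delta$. For each $i$ I would choose $h_{i} \in \polyspace_{P}$ with $P_{h_{i}} = Q_{i}$ and coefficients selected to make $|Z(h_{i})|$ as close to $q - 1$ as possible: a binomial $a t^{\alpha} + b t^{\beta}$ with $a,b \in \FF_{q}^{\times}$ attains exactly $q-1$, and a suitable trinomial for $\Delta$ attains at least $q-2$. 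Setting $h = h_{1} \cdots h_{L(P)} \in \polyspace_{P}$ and applying inclusion-exclusion together with the Bernstein--Kushnirenko bound $|Z(h_{i}) \cap Z(h_{j})| \leq 2 V(Q_{i}, Q_{j})$ (where $V$ denotes mixed volume), combined with the identity $\operatorname{Area}(Q) = \sum_{i} \operatorname{Area}(Q_{i}) + 2\sum_{i<j} V(Q_{i}, Q_{j})$, yields $|Z(h)| \geq L(P)(q-1) - \operatorname{Area}(Q)$ after absorbing the $O(L(P))$ loss from $\Delta$-summands into the main correction term.

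Next, I would bound $|Z(g)|$ from above by analyzing each irreducible factor separately. For $g_{i}$ with $P_{g_{i}}$ a primitive segment or lattice-equivalent to $\Delta$, an elementary count gives $|Z(g_{i})| \leq q - 1$. For $g_{i}$ with $L(P_{g_{i}}) \geq 2$, $g_{i}$ may be assumed absolutely irreducible (merely irreducible factors that split over an extension have strictly fewer $\FF_{q}$-zeros), and the Weil--Deligne bound gives $|Z(g_{i})| \leq q + C_{i}\sqrt{q}$ with $C_{i}$ controlled by the arithmetic genus of the associated toric curve, which is in turn bounded by the number of interior lattice points of $P_{g_{i}}$ and hence by its area. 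Aggregating these bounds across factors, using $\sum_{i} L(P_{g_{i}}) \leq L(P)$ (from Proposition 1.2 of Soprunov--Soprunova applied inductively) and that the total geometric contribution is controlled by $\operatorname{Area}(P)$, produces an estimate of the form $|Z(g)| \leq L(q-1) + m\sqrt{q}$ with $m = \max\{2\operatorname{Area}(P) - 1, 6\}$.

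Combining the two estimates and using $|Z(g)| \geq |Z(h)|$ (by maximality of $g$) yields $(L(P) - L)(q-1) \leq m\sqrt{q} + \operatorname{Area}(Q)$. If $L < L(P)$ then $L(P) - L \geq 1$, forcing $q - 1 - m\sqrt{q} \leq \operatorname{Area}(Q)$, which contradicts the hypothesis; hence $L = L(P)$. The main obstacle is calibrating the upper bound on $|Z(g)|$: one must show that the collective Weil--Deligne corrections across all irreducible factors with nontrivial Newton polytopes sum to at most $m\sqrt{q}$ with the precise constant $m = \max\{2\operatorname{Area}(P) - 1, 6\}$, accounting for both regimes of the maximum (the constant $6$ for small-area $P$, and $2\operatorname{Area}(P) - 1$ for large-area $P$). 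Handling non-absolutely-irreducible factors via Galois descent is a secondary but manageable wrinkle, since such factors only make $|Z(g)|$ smaller.
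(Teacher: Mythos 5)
Your overall strategy — build a comparison polynomial $h$ from a maximal decomposition to get a lower bound on $\max |Z(g)|$, then upper-bound $|Z(g)|$ factor-by-factor and force a contradiction — is the same skeleton as the paper's. But the two halves diverge in ways that leave a genuine gap.

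On the lower bound, the paper does not decompose $Q$ into arbitrary $\Delta$'s and segments. It invokes \cite[Proposition 3.1]{SS1}, which says a \emph{smallest} maximal decomposition is lattice equivalent to the zonotope $n[0,e_1] + m[0,e_2] + r[0,e_1+e_2]$, and then takes $f = \prod(x-a_i)\prod(y-b_j)\prod(xy-c_k)$. The pairwise intersections are counted \emph{exactly} by an elementary inclusion-exclusion (each pair of lines meets in exactly one point of $(\FF_q^\times)^2$), yielding $|Z(f)| \ge L(P)(q-1) - \operatorname{Area}(Q)$ with no loss term to absorb. Your Bernstein--Kushnirenko route is not wrong in spirit, but the ``$O(L(P))$ loss from $\Delta$-summands'' you wave away is precisely what the zonotope choice eliminates; you would need a further argument to see that it can be absorbed, and choosing the zonotope form is the cleaner fix.

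The more serious gap is the upper bound on $|Z(g)|$. You split on $L(P_{g_i}) \ge 2$ versus $< 2$ and invoke Weil--Deligne with a genus constant to get $|Z(g)| \le L(q-1) + m\sqrt{q}$, and you yourself flag that calibrating the constant to $m = \max\{2\operatorname{Area}(P)-1, 6\}$ is the ``main obstacle.'' That obstacle is real and your proposal does not overcome it: the per-factor genus bounds do not obviously aggregate to exactly this $m$, and $L(P_{g_i}) \ge 2$ is not the relevant dichotomy. The paper's key tool is \cite[Proposition 2.1]{SS1}, which says that an irreducible $g_i$ whose Newton polytope is \emph{not} lattice equivalent to the exceptional triangle $\exc$ satisfies $|Z(g_i)| \le q - 1$ — no $\sqrt{q}$ correction at all. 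So the only bad case is an $\exc$-type factor, and the paper rules that out by a separate contradiction argument: if some $g_i$ is $\exc$-type, then $L < L(P)$ (since $P$ admits no maximal decomposition with an $\exc$ summand), and one applies \cite[Proposition 2.3]{SS1} (when $L = L(P)-1$, giving the constant $6$) and \cite[Lemma 2.6]{SS1} (when $L < L(P)-1$, giving the constant $2\operatorname{Area}(P)-1$) to derive $|Z(g)| < |Z(f)|$, contradicting maximality. Once $\exc$-type factors are excluded, the clean bound $|Z(g)| \le L(q-1)$ combined with the lower bound gives $0 \le L(P)-L \le \operatorname{Area}(Q)/(q-1) < 1$ immediately. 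Your proposal is missing this dichotomy and the two Soprunov--Soprunova estimates that make the constant $m$ appear; without them, the claimed aggregate $\sqrt{q}$ bound is unsubstantiated.
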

\begin{proof}
    Let $g \in \polyspace_P$ be a polynomial with the maximum number of zeros and write $g=g_1 \cdots g_L$, where each $g_i\in \polyspace_P$ is irreducible.
    Since $Q$ is a smallest maximal decomposition, \[
        Q \approx Z = n[0,{e}_1] + m[0,{e}_2] + r[0,{e}_1+{e}_2]
    \]
    by \cite[Proposition 3.1]{SS1}.  Note that $\operatorname{Area}(Q)=nm+nr+mr$ and by Corollary \ref{cor:smallest-zonotope-minkowski}, $L(Q) = n+m+r.$
    Setting \[
        f_1 = \prod_{i=1}^n (x-a_i) \;\;\; \text{and} \;\;\;
        f_2 = \prod_{i=1}^m (y-b_i) \;\;\; \text{and} \;\;\;
        f_3 = \prod_{i=1}^r (xy-c_i)
    \]
    where $a_i,b_i,c_i \in \FF_q^\times$ are distinct, then $f = f_1f_2f_3 \in \polyspace_Z$.
    We then have  \begin{align*}
        |Z(g)| &\ge |Z(f)| \\
        &= |Z(f_1)| + |Z(f_2)| + |Z(f_3)| \\
        &\hspace{5mm} - \left( |Z(f_1) \cap Z(f_2)| + |Z(f_1) \cap Z(f_3)| + |Z(f_2) \cap Z(f_3)| \right) \\
        &\hspace{5mm} + |Z(f_1) \cap Z(f_2) \cap Z(f_3)| \\
        &= (n+m+r)(q-1) - (nm + nr + mr) + |Z(f_1) \cap Z(f_2) \cap Z(f_3)| \\
        &\geq L(P)(q-1) - \operatorname{Area}(Q).
    \end{align*}
    
    With this lower bound, we now need to establish an upper bound on $|Z(g)|$ in terms of $L$ and $q$.
    We first rule out the case of any of the $g_{i}$'s having a Newton polytope which is lattice equivalent to the exceptional triangle, $\exc$.  To the contrary, suppose that $g_{1}$ has a Newton polytope which is lattice equivalent to $\exc$.  
    Since $P$ does not contain an exceptional triangle as the summand of any maximal decomposition, we must have $L < L(P)$.
    We deal with the cases when $L=L(P)-1$ and when $L < L(P)-1$, deriving contradictions regarding the cardinality of $Z(g)$ in both.
    \begin{enumerate}
        \item Suppose that $L=L(P)-1$ then $|Z(g)| \le (L(P)-1)(q-1) + \lfloor 6\sqrt{q} \rfloor$ by \cite[Proposition 2.3]{SS1}.
        But, \[
            L(P)(q-1) + \lfloor 6\sqrt{q} \rfloor - (q-1) < L(P)(q-1) - \operatorname{Area}(Q) = |Z(f)| \le |Z(g)|
        \]
        as $q-1-6\sqrt{q} > \operatorname{Area}(Q)$ by assumption.
        This is a clear contradiction, so we cannot have $L = L(P) - 1$.

        \item Suppose that $L < L(P)-1$ then $|Z(g)| \le L(q-1) + 2(\operatorname{Area}(P)+3/2-2L)\sqrt{q} + 2$ by \cite[Lemma 2.6]{SS1}.
        But, \begin{align*}
            L(q-1) + 2(\operatorname{Area}(P)+3/2-2L)\sqrt{q} + 2 &\le L(P)(q-1) + (2\operatorname{Area}(P)+3-4L)\sqrt{q} + 2 - 2(q-1) \\
            &\le L(P)(q-1) + (2\operatorname{Area}(P)-1)\sqrt{q} - (q-1) \\
            &< L(P)(q-1) - \operatorname{Area}(Q) = |Z(f)| \le |Z(g)|
        \end{align*}
        as $q-1-(2\operatorname{Area}(P)-1)\sqrt{q} > \operatorname{Area}(Q)$ by assumption.
        This is another contradiction, so we cannot have $L < L(P) - 1$.
    \end{enumerate}
    Thus, no $g_{i}$ can have a Newton polytope which is lattice equivalent to $\exc$.
    
    As no $g_{i}$ has a Newton polytope which is lattice equivalent to $\exc$, we have that $|Z(g)| \le \sum_{i}|Z(g_{i})| \le L(q-1)$ by \cite[Proposition 2.1]{SS1}.
    Thus, as $|Z(g)|$ is maximal \[
        |Z(f)| = L(P)(q-1) - \operatorname{Area}(Q) \le |Z(g)| \le L(q-1)
    \]
    which gives us that \[
        L(P) - L \le \frac{\operatorname{Area}(Q)}{q-1}.
    \]
    Finally, since $q-1 > \operatorname{Area}(Q)$ and $L(P) \ge L$, \[
        0 \le L(P) - L \le \frac{\operatorname{Area}(Q)}{q-1} < 1
    \]
    which implies $L(P)=L$ as both $L(P)$ and $L$ are integral.
\end{proof}

\begin{remark}
    A tighter bound for $q$ can be derived, but we omit this computation as we only wish to showcase the correspondence between polynomials with the maximum number of zeros and maximal decompositions.\end{remark}

\subsection{An Application}
By Proposition \ref{prop:period-1-nicity}, all period 1 polytopes satisfy the conditions of Proposition \ref{prop:polytope-giving-max-zeros}, so we begin by examining some simple period 1 polytopes in $\RR^{2}$, namely, those which are lattice equivalent to \[
    m[0,{e}_{1}] + n[0,{e}_{2}] + \ell[0,{e}_{1}+{e}_{2}].
\]
We begin by classifying all maximal decompositions in polytopes which take the above form; and, in doing so, prove that all polytopes of the above form are smallest maximal decompositions in $\RR^{2}$.

\begin{lemma} \label{lemma:max-decomps-in-smallest-maxl}
    Let $P \subset \RR^2$ be an integral convex polytope which is lattice equivalent to \[
        Q = m[0,{e}_1] + n[0,{e}_2] + \ell[0,{e}_1+{e}_2].
    \] 
    For any maximal decomposition $Z \subseteq P$, we have that $P = Z$.
\end{lemma}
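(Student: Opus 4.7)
The plan is to reduce via lattice equivalence to $P = Q$, classify the summands of a maximal decomposition using width functions, and then use a short linear program to force the summands to be exactly the generators of $Q$.

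Without loss of generality I would assume $P = Q$. By Corollary \ref{cor:smallest-zonotope-minkowski}, $L(P) = m+n+\ell$, so any maximal decomposition $Z = Z_{1} + \cdots + Z_{L} \subseteq Q$ has $L = m+n+\ell$ summands, each of Minkowski length $1$. By \cite[Theorem 1.4]{SS1}, each $Z_{i}$ is therefore a primitive line segment, a triangle lattice equivalent to $\Delta$, or a triangle lattice equivalent to $\exc$.

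The main tool is the width function $f_{w}(A) = \max_{x \in A}(w \cdot x) - \min_{x \in A}(w \cdot x)$, which is additive under Minkowski sums and satisfies $f_{w}(A) \leq f_{w}(Q)$ whenever $A \subseteq Q$. A direct computation gives $f_{(1,1)}(Q) = m+n+2\ell$ and $f_{(1,-1)}(Q) = m+n$, whose sum is $2L$. I would then show that each summand $Z_{i}$ contributes at least $2$ to $f_{(1,1)} + f_{(1,-1)}$, with equality only for primitive segments in the directions $\pm e_{1}, \pm e_{2}, \pm(e_{1}+e_{2}), \pm(e_{1}-e_{2})$: for a primitive segment $[0,(a,b)]$ the contribution equals $|a+b| + |a-b| = 2\max(|a|,|b|) \geq 2$; for any $2$-dimensional integral polytope $T$, the contribution is at least $3$, since if both widths equalled $1$ then $T$ would be contained in a rotated unit square of the form $\{a \leq x+y \leq a+1,\ b \leq x-y \leq b+1\}$ whose four corners contain only two integer points (by a parity count on $x+y$ versus $x-y$), forcing $\dim T \leq 1$.

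Summing the lower bounds gives $\sum_{i}\bigl(f_{(1,1)}(Z_{i}) + f_{(1,-1)}(Z_{i})\bigr) \geq 2L$, and comparing with the upper bound $f_{(1,1)}(Z) + f_{(1,-1)}(Z) \leq 2L$ from $Z \subseteq Q$ forces equality throughout. In particular no triangular summand appears, and every $Z_{i}$ is a primitive segment in one of the four short directions above. Letting $n_{A}, n_{B}, n_{C}, n_{D}$ count summands of directions $e_{1}, e_{2}, e_{1}+e_{2}, e_{1}-e_{2}$ respectively, the constraints $f_{(1,0)}(Z) \leq m+\ell$, $f_{(0,1)}(Z) \leq n+\ell$, and $f_{(1,-1)}(Z) \leq m+n$, combined with $n_{A} + n_{B} + n_{C} + n_{D} = m+n+\ell$, admit the unique solution $n_{A} = m,\ n_{B} = n,\ n_{C} = \ell,\ n_{D} = 0$. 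Hence $Z$ is a translate of $Q$, and the containment $Z \subseteq Q$ forces this translate to be trivial, giving $Z = Q = P$.

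The main obstacle is the lower bound $f_{(1,1)}(T) + f_{(1,-1)}(T) \geq 3$ for $2$-dimensional integral polytopes, which is the one place where the $\Delta$- and $\exc$-equivalent summands are eliminated; everything else reduces to routine linear-programming bookkeeping on the four width constraints.
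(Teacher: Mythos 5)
Your proposal is correct, and it is a genuinely different route from the paper's. The paper's proof leans on the earlier Lemma~\ref{lemma:boundary-pt-ineq}: it shows that the injective boundary-point map $f:\partial Z\to\partial Q$ must in fact be a bijection (since $\#\partial Z = 2L(Z) = 2L(Q) = \#\partial Q$), then argues that $f^{-1}$ fixes the six vertices $O,A,B,C,D,E$ of $Q$ one at a time, with a residual case analysis on the primitive directions $v_i$ appearing in the zonotope to pin down $Z=Q$. Your argument bypasses the boundary-point machinery entirely: additivity of the width functional $f_w$ under Minkowski sums, together with the identity $f_{(1,1)}(Q)+f_{(1,-1)}(Q)=2L(P)$, immediately eliminates both $\Delta$- and $\exc$-type summands via the neat parity observation that a unit square in $(x+y,x-y)$-coordinates holds only two lattice points, and then the four width inequalities constitute a small linear program whose unique nonnegative integer solution $(n_A,n_B,n_C,n_D)=(m,n,\ell,0)$ recovers $Q$ up to translation; rigidity of a convex body under translation inside itself finishes the job. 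I verified the LP: adding the $f_{(1,0)}$ and $f_{(0,1)}$ constraints gives $n_C+n_D\le\ell$, the $f_{(1,-1)}$ constraint gives $n_D\le n_C-\ell$, and combining forces $n_D=0$, $n_C=\ell$, $n_A=m$, $n_B=n$. Your approach is more self-contained (it does not invoke Lemma~\ref{lemma:boundary-pt-ineq}) and avoids the delicate case analysis in the paper; the trade-off is that it applies only to polytopes simple enough to be boxed in by a few width constraints, whereas the paper's boundary-point lemma is reused elsewhere. One minor point worth stating explicitly when writing this up: a maximal decomposition could a priori be one-dimensional, but the unique LP solution already rules that out whenever at least two of $m,n,\ell$ are positive, and the degenerate cases are immediate.
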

\begin{proof}
    First, we claim that it's sufficient to show that the theorem statement holds when $Z$ is a smallest maximal decomposition.
    Given any maximal decomposition $R \subseteq P$, there exists a smallest maximal decomposition contained in $R$, call it $Z'$.
    But, by assumption, the theorem statement holds for $Z'$ so $P = Z' \subseteq R \subseteq P$ which implies $R=P$.
    Thus, we only need to show the theorem statement holds for smallest maximal decompositions in $P$.

    Since $P \approx Q$, it suffices to show this holds for any smallest maximal decomposition $Z \subseteq Q$.
    Note that $\#\partial Q = 2L(Q) = 2L(Z) = \#\partial Z$ so the map, $f: \partial Z \to \partial Q$, in Lemma \ref{lemma:boundary-pt-ineq} is a bijection by the pigeonhole principle and the fact that both $\partial Q$ and $\partial Z $ are finite. 
    Throughout the rest of this proof we will use the labeling of $Q$ depicted in Figure \ref{fig:max-decomps-in-smallest-maxl}, where $Q$ has vertices $O,A,B,C,D,E$.
    It suffices to show that $f^{-1}$ fixes all vertices of $Q$ as then all vertices of $Q$ are contained in $\partial Z$ so $Q \subseteq Z \subseteq Q$ and we're done.

    If $f^{-1}(O) \neq O$ then $Z$ is contained in \[
        Q' = \conv \left\{ O + (1,0), O + (0,1), A, B, C, D, E \right\}
    \]
    which also satisfies the conditions of Lemma \ref{lemma:boundary-pt-ineq}, so \[
        \#\partial Z \le \#\partial Q' = \#\partial Q - 1 = \#\partial Z - 1
    \]
    which is a contradiction. Hence we conclude, $f^{-1}(O) = O$.
    A symmetric argument shows that $f^{-1}(C) = C$. 
    Now, since $Z$ is a smallest maximal decomposition, we can write it as \[
        Z = a + \alpha_1 E_1 + \alpha_2 E_2 + \alpha_3 E_3
    \]
    where $a \in Z$, $\alpha_i \ge 0$, and each $E_i = [0, {v}_i]$ is a primitive line segment.
    Moreover, we may choose $a$ with a minimal coordinate sum so each $v_i$ has a non-negative coordinate sum and, since $f^{-1}(O) = O$, we know that $O = (0,0) \in \partial Z$ which implies $a = (0,0)$ as $a$ has the smallest coordinate sum in $Q$.
    Note that $\alpha_{1}{v}_{1} + \alpha_{2}{v}_{2} + \alpha_{3}{v}_{3} \in Z$ so, if all $v_i$ have positive $y$-coordinate then \[
        m+n+\ell = \alpha_1 + \alpha_2 + \alpha_3 \le m+\ell. 
    \]
    Therefore, $n = 0$ which implies that $Q \approx m[0, {e}_1] + \ell[0,{e}_2]$.
    In which case, identical arguments to that for the fact $f^{-1}(O)=O$ hold for $A=B$ and $E=D$ so we're done.
    Thus, we may assume that ${v}_1$ has a 0 in its $y$-component which then implies that ${v}_1 = {e}_1$. Moreover, ${v}_2$ and ${v}_3$ must both have positive $y$-component as they are distinct from ${v}_1 = {e}_1$.
    Therefore, we must have that \[
        \alpha_2 + \alpha_3 \le n+\ell.
    \]
    But, if $f^{-1}(A) \neq A$ then $\alpha_1 < m$ which implies that $L(Q) = \alpha_1 + \alpha_2 + \alpha_3 < m+n+\ell = L(Q)$, so we must have that $f^{-1}(A) = A$.
    Furthermore, since $A \in Z$, we must have that $\alpha_1 = m$ as both ${v}_2$ and ${v}_3$ have non-zero $y$-components.
    Therefore, $\alpha_2 + \alpha_3 = n+\ell$ which implies that both ${v}_2$ and ${v}_3$ must have $y$-component equal to $1$.
    If either ${v}_2$ or ${v}_3$ have an $x$-component strictly greater than one, say $b$, then $(m+b,1) \in Z$; but all $(x,y) \in Z$ must satisfy $y \ge x-m$ which does not hold for $(m+b,1)$.
    Thus, both ${v}_2$ and ${v}_3$ must have $x$-component at most $1$ so, without loss of generality, ${v}_2 = {e}_2$ and ${v}_3 = {e}_1 + {e}_2$.
    It is clear then that $\alpha_2 = n$ and $\alpha_3 = \ell$ which implies that $Z = Q$.
    \begin{figure}
        \centering
        \begin{tikzpicture}
            \draw[line width=.4mm] (0,0) -- node[below] {\large $m$} (2,0) -- node[below] {\large $\ell$} (4,2) -- (4,4) -- (2,4) -- (0,2) -- node[left] {\large $n$} (0,0);
            \draw[dashed, line width=.3mm] (0,2) -- (2,2) -- (2,0);
            \draw[dashed, line width=.3mm] (2,2) -- (4,4);

            \draw[fill] (0,0) circle (.75mm);
            \draw[anchor=north east] node at (0,0) {\Large $O$};
            \draw[fill] (2,0) circle (.75mm);
            \draw[anchor=north] node at (2,0) {\Large $A$};
            \draw[fill] (4,2) circle (.75mm);
            \draw[anchor=west] node at (4,2) {\Large $B$};
            \draw[fill] (4,4) circle (.75mm);
            \draw[anchor=south west] node at (4,4) {\Large $C$};
            \draw[fill] (2,4) circle (.75mm);
            \draw[anchor=south] node at (2,4) {\Large $D$};
            \draw[fill] (0,2) circle (.75mm);
            \draw[anchor=east] node at (0,2) {\Large $E$};
        \end{tikzpicture}
        \caption{$Q$ with labeling described in Lemma \ref{lemma:max-decomps-in-smallest-maxl}}
        \label{fig:max-decomps-in-smallest-maxl}
    \end{figure}
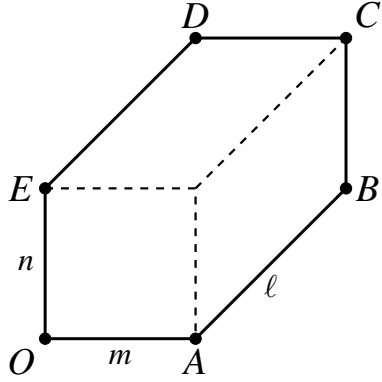
\end{proof}

We next will develop a formula for computing the minimumm distance of the toric code $C_P$ when $P \subset \RR^2$ is a polytope lattice equivalent to 
 \[
    m[0,{e}_{1}] + n[0,{e}_{2}] + \ell[0,{e}_{1}+{e}_{2}],
\]
for some integers $m,n, \ell \geq 0$. Note, that we may interchange the $m,n$ and $l$ and still be lattice equivalent to $P$, so we may further assume that $0 \leq n \leq m \leq \ell$.  

\begin{theorem} \label{thrm:bound-maxl-max-zeros}

    Let $P \subseteq [0,q-1]^2$ be an integral convex polytope which is lattice equivalent to \[
        Z = m[0, {e}_1] + n[0, {e}_2] + \ell[0, {e}_1 + {e}_2]
    \]
    where $0 \leq n \leq m \leq \ell$, 
    then, \[
        \max_{0 \neq f \in \polyspace_P} |Z(f)| \leq L(P)(q-1) - n\ell - m\ell    \]
    when $q-1-M\sqrt{q} > \operatorname{Area}(P)$, where $M = \max \left\{ 2\operatorname{Area}(P)-1, 6 \right\}$.
\end{theorem}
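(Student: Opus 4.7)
The plan is to combine Proposition \ref{prop:polytope-giving-max-zeros} (which tightly restricts the factorization of any maximizer $g$) with Lemma \ref{lemma:max-decomps-in-smallest-maxl} (which forces the Newton polytopes of the factors to reconstitute $P$) and finish with a direct inclusion--exclusion count of torus zeros.

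First I would verify that Proposition \ref{prop:polytope-giving-max-zeros} applies. Since the edges of $P$ lie only in the directions $\pm e_1, \pm e_2, \pm(e_1+e_2)$, whereas the edges of $T_0$ lie in the directions $(1,2),(1,-1),(-2,-1)$, the exceptional triangle cannot appear as a Minkowski summand of $P$ and so cannot appear in any maximal decomposition. The proposition then yields $g = g_1 \cdots g_L$ with $L = L(P) = m+n+\ell$ irreducible factors such that every $P_{g_i}$ is either a primitive lattice segment or lattice equivalent to $\Delta$.

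Next I would rule out triangle summands. By Lemma \ref{lemma:max-decomps-in-smallest-maxl}, $P_{g_1} + \cdots + P_{g_L} = P$ up to translation, so each $P_{g_i}$ has edge directions among those of $P$. The only triangles lattice equivalent to $\Delta$ with this property are (up to translation) $T = \conv\{0, e_1, e_1+e_2\}$ and its reflection $-T$. Bookkeeping the edge-length contributions in each of the six directions $\pm e_1, \pm e_2, \pm(e_1+e_2)$ --- noting that a primitive segment $[0,v]$ contributes symmetrically to $v$ and $-v$, while $T$ and $-T$ each contribute asymmetrically --- and matching these totals to the edge lengths $(m,m,n,n,\ell,\ell)$ of $P$ shows that if the decomposition uses $k_+$ copies of $T$ and $k_-$ copies of $-T$, the symmetric-pair constraints force $k_+ = k_-$ and the total atom count is $m+n+\ell - k_+$. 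Setting this equal to $L(P) = m+n+\ell$ forces $k_+ = k_- = 0$. Hence every $P_{g_i}$ is a primitive segment, and uniqueness of the zonotope decomposition of $P$ forces $\{P_{g_i}\}$ to consist (up to translation) of $m$ copies of $[0,e_1]$, $n$ of $[0,e_2]$, and $\ell$ of $[0,e_1+e_2]$.

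Finally, after absorbing monomial factors (which don't affect zeros on $(\FF_q^\times)^2$), each $g_i$ takes the form $t_1 - a_i$, $t_2 - b_j$, or $t_1 t_2 - c_k$; by maximality of $|Z(g)|$, the $a_i$, $b_j$, and $c_k$ may be assumed pairwise distinct within each group. Setting $A = \bigcup_i Z(t_1-a_i)$, $B = \bigcup_j Z(t_2-b_j)$, and $C = \bigcup_k Z(t_1 t_2 - c_k)$, one computes $|A| = m(q-1)$, $|B| = n(q-1)$, $|C| = \ell(q-1)$, $|A \cap B| = mn$, $|A \cap C| = m\ell$, $|B \cap C| = n\ell$, and $|A \cap B \cap C| = \#\{(i,j,k) : a_i b_j = c_k\} \le mn$ (each pair $(i,j)$ forces at most one $k$ since the $c_k$ are distinct). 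Inclusion--exclusion then yields
\[
|Z(g)| = |A \cup B \cup C| \le L(P)(q-1) - (mn + m\ell + n\ell) + mn = L(P)(q-1) - m\ell - n\ell,
\]
as desired; the hypothesis $n \le m \le \ell$ makes $mn$ the smallest of the three pairwise products, so $mn$ is indeed a valid upper bound on the triple intersection. I expect the main obstacle to be the triangle-elimination step, where one must correctly identify the $\Delta$-equivalent triangles compatible with $P$'s edge directions and execute the edge-counting bookkeeping that shows triangle summands strictly reduce the atom count.
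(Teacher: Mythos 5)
Your proof follows the same overall route as the paper: apply Proposition~\ref{prop:polytope-giving-max-zeros} and Lemma~\ref{lemma:max-decomps-in-smallest-maxl} to pin down the shape of a zero-maximizer, then finish by inclusion--exclusion with the observation that $\min\{mn,m\ell,n\ell\}=mn$ under $n\leq m\leq\ell$. Two points where you diverge in detail. First, your edge-direction bookkeeping to exclude $\Delta$-equivalent factors is a genuine (and welcome) addition: the paper passes from ``$P_f=Z$'' directly to the product form $\prod(x-a_i)\prod(y-b_j)\prod(xy-c_k)$ without explaining why no irreducible factor can have a triangular Newton polytope, whereas your $\pm T$ edge-length count closes that gap cleanly (Lemma~\ref{lemma:max-decomps-in-smallest-maxl} only yields $P_f=Z$ and the uniqueness of the \emph{zonotope} maximal decomposition, not that the $P_{g_i}$ themselves are segments). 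Second, you dispose of the repeated-root case by asserting that a maximizer must have the $a_i$, $b_j$, $c_k$ pairwise distinct within each group, whereas the paper treats the case $A+B+C<m+n+\ell$ by a separate direct estimate. Your maximality claim is correct --- replacing a repeated $a_i$ by a fresh value strictly enlarges the zero set because a full line $\{a\}\times\FF_q^\times$ cannot be covered by the at most $n+\ell<q-1$ points coming from $f_2$ and $f_3$ --- but you should say this explicitly rather than leave it as ``by maximality,'' since it is exactly the step the paper spends half its inclusion--exclusion argument on.
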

\begin{proof}
       Suppose that  $0 \leq n \leq m \leq l$ and $q-1-M\sqrt{q} > \operatorname{Area}(P) = \operatorname{Area}(Z) = mn + m\ell + n\ell$, and let $0 \neq f \in \polyspace_Z$ be a polynomial with the maximum number of zeroes.
    Then, by Proposition \ref{prop:polytope-giving-max-zeros}, $f = f_1 \ldots f_{L(Z)}$ so the Newton polytope corresponding to $f$, $P_f$, is a maximal decomposition in $Z$.
    But, by Lemma \ref{lemma:max-decomps-in-smallest-maxl}, the only maximal decomposition in $Z$ is $Z$ itself so \[
        f = \underbrace{\prod_{i=1}^m (x-a_i)}_{f_1} \;\; \underbrace{\prod_{i=1}^n (y-b_i)}_{f_2} \;\; \underbrace{\prod_{i=1}^\ell (xy-c_i)}_{f_3} 
    \]
    for some $a_i, b_i, c_i \in \FF_q^\times$.

    Letting $A,B,C$ denote the number of distinct $a_i,b_i,c_i$, respectively, then \begin{align*}
        |Z(f)| &= |Z(f_1)| + |Z(f_2)| + |Z(f_3)| \\
        &\hspace{5mm} - \left( |Z(f_1) \cap Z(f_2)| + |Z(f_1) \cap Z(f_3)| + |Z(f_2) \cap Z(f_3)| \right) \\
        &\hspace{5mm} + |Z(f_1) \cap Z(f_2) \cap Z(f_3)| \\
        &= (A+B+C)(q-1) - (AB + AC + BC) + \#\{i,j,k : a_ib_j = c_k\}.
    \end{align*}
    
    If $A=m$, $B=n$ and $C= \ell$ (i.e the  $a_i$ are distinct, $b_i$ are all distinct and $c_i$ are all distinct) then
     \begin{equation*} 
        |Z(f)| = L(P)(q-1) - m\ell - n\ell - mn + \#(i,j,k : a_ib_j = c_k) \leq L(P)(q-1) - m\ell - n\ell - mn + \min \left\{ m\ell, n\ell, mn \right\} 
    \end{equation*}
    where the final inequality follows from the fact that $\#\{i,j,k : a_ib_j = c_k\}$ is bounded above by $\ell m, \ell n, mn$.
    To see this, if we fix an $a_i$ and $b_j$ then there is at most one $c_k$ such that $a_ib_j = c_k$ as all $c_k$ are distinct from one another.
    So each choice of $a_i$ and $b_j$ corresponds to at most one triple $(a_i,b_j,c_k)$ such that $a_ib_j=c_k$, which bounds the number of such triples above by $mn$.
    Identical arguments show that this quantity is also bounded above by $m\ell$ and $n\ell$.  However, note that since we are assuming that $0 \leq n \leq m \leq \ell$, we have that $\min\{ mn, n\ell, m\ell \} = mn$, since if we write $n=m-k$ for some integer $k$, $0 \leq k \leq m$, and $\ell=m+p$ for some integer $p \geq 0$, then  $\ell n = (m-k)(m+p) = m(m-k) + p(m-k) = mn +pn \geq mn$ and $\ell m = m(m+p) = m^2+pm \geq m^2 \geq m^2-km = mn$.  Thus we have that 
\begin{equation*} \label{eq:desired-eq}
        |Z(f)| \leq L(P)(q-1) - m\ell - n\ell 
    \end{equation*}

    Now suppose that $A+B+ C < m+n+\ell$, then $ |Z(f)|  \leq L(P)(q-1) - m\ell - n\ell $, since
 \begin{align*}
        L(P)(q-1) - \ell(m+n) - |Z(f)| &= (m+n+\ell-A-B-C)(q-1) \\
        &\hspace{5mm} - (m\ell + n\ell - AC - BC) + \left( AB - \#\{i,j,k : a_ib_j=c_k\} \right) \\
        &\ge (m+n+\ell-A-B-C)(mn+m\ell+n\ell)   && \text{ since $q > mn+m\ell+n\ell$ } \\
        &\hspace{5mm} - (m\ell + n\ell - AC - BC) - \#\{i,j,k : a_ib_j=c_k\}    && \text{ and $AB \ge 0$} \\
        &\ge (mn+m\ell+n\ell) - (m\ell + n\ell - AC - BC)\\  
        &\hspace{5mm} - \#\{i,j,k : a_ib_j=c_k\} \\
        &\ge mn - \#\{i,j,k : a_ib_j=c_k\} \\
        &\ge 0
    \end{align*}
    where the last inequality follows from the bound $\#\{i,j,k : a_ib_j=c_k\} \le mn$.    
    
   \end{proof}

   \begin{theorem} \label{thrm:bound-maxl-max-zeros} 
    Let $P \subseteq [0,q-1]^2$ be an integral convex polytope which is lattice equivalent to \[
        Z = m[0, {e}_1] + n[0, {e}_2] + \ell[0, {e}_1 + {e}_2]
    \]
    where $0 \leq n \leq m \leq \ell$,  and suppose there exists an integer $t$, with $m \leq t \leq \ell$ and $t \mid q-1$
    then, \[
        \max_{0 \neq f \in \polyspace_P} |Z(f)| = L(P)(q-1) - n\ell - m\ell    \]
    when $q-1-M\sqrt{q} > \operatorname{Area}(P)$, where $M = \max \left\{ 2\operatorname{Area}(P)-1, 6 \right\}$.
\end{theorem}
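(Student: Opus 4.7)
The plan is to combine the upper bound already proved in the preceding theorem with an explicit construction that attains it. The preceding theorem gives $\max_{0 \neq f \in \polyspace_P} |Z(f)| \le L(P)(q-1) - n\ell - m\ell$, so what remains is to exhibit a polynomial $f \in \polyspace_Z$ (equivalently in $\polyspace_P$ up to lattice equivalence) with $|Z(f)| = L(P)(q-1) - n\ell - m\ell$. Inspecting the inclusion--exclusion computation in the proof of the preceding theorem, when one takes $f = f_1 f_2 f_3$ with $A = m$, $B = n$, $C = \ell$ distinct roots respectively, equality is achieved precisely when $\#\{(i,j,k) : a_i b_j = c_k\} = mn$, i.e. every product $a_i b_j$ hits one of the $c_k$'s. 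Thus the whole problem reduces to choosing $\{a_i\}$, $\{b_j\}$, $\{c_k\}$ so that the $mn$ products $a_i b_j$ are all contained in the set $\{c_1,\dots,c_\ell\}$.

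The key idea is to confine the $a_i$'s and $b_j$'s to a finite multiplicative subgroup of $\FF_q^\times$ that fits inside the $\ell$ slots available for the $c_k$'s. Concretely, pick an integer $t$ with $m \le t \le \ell$ and $t \mid q-1$ as furnished by hypothesis; let $\mu_t \subseteq \FF_q^\times$ be the (unique) subgroup of order $t$. Since $n \le m \le t$, one may choose $a_1,\ldots,a_m \in \mu_t$ pairwise distinct and $b_1,\ldots,b_n \in \mu_t$ pairwise distinct. Since $t \le \ell$ and $\ell \le q-2 < q-1$ (because $P \subseteq [0,q-2]^2$, so $\ell$ is at most the width of $P$), one may choose $c_1,\ldots,c_\ell \in \FF_q^\times$ pairwise distinct with $\mu_t \subseteq \{c_1,\ldots,c_\ell\}$. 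Then every product $a_i b_j$ lies in $\mu_t$, so equals a unique $c_k$, giving $\#\{(i,j,k) : a_i b_j = c_k\} = mn$ exactly.

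Setting $f = \prod_{i=1}^m (x-a_i) \cdot \prod_{j=1}^n (y-b_j) \cdot \prod_{k=1}^\ell (xy-c_k) \in \polyspace_Z$, the standard inclusion--exclusion computation carried out in the previous theorem's proof then yields
\[
|Z(f)| = (m+n+\ell)(q-1) - (mn + m\ell + n\ell) + mn = L(P)(q-1) - m\ell - n\ell,
\]
which matches the upper bound. Combined with the inequality from the preceding theorem (whose hypotheses $q-1 - M\sqrt{q} > \operatorname{Area}(P)$ are assumed), this proves equality.

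The only slightly delicate points are verifying that the specific $f$ produced actually lies in $\polyspace_Z$ (which is immediate since $P_f = Z$), and confirming the feasibility of the pigeonhole choices: we need $m \le t$ and $n \le t$ (supplied by $n \le m \le t$), we need $t \le \ell$ (supplied by $t \le \ell$), and we need $\ell \le q-1$ to fit $\ell$ distinct nonzero values (supplied by $P \subseteq [0,q-2]^2$). No step here requires the structural results on maximal decompositions beyond what is already invoked in the upper-bound theorem, so the argument is essentially a clean construction once the roots-of-unity trick is recognized as the mechanism that forces $a_i b_j$ to land in a set of size only $t \le \ell$ rather than the generic $m+n-1$.
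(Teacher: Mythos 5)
Your construction is exactly the one the paper uses: both proofs pick an element $\alpha \in \FF_q^\times$ of order $t$ (with $m \le t \le \ell$ and $t \mid q-1$), place the $a_i$'s and $b_j$'s inside the cyclic subgroup $\langle\alpha\rangle$ of order $t$, and choose the $c_k$'s to contain all of $\langle\alpha\rangle$, so that every product $a_i b_j$ lands among the $c_k$'s and the inclusion--exclusion count hits the upper bound. The argument is correct and matches the paper's, down to the feasibility checks.
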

 
 \begin{proof}
 By the previous theorem we have that the number of zeros of any polynomial $ f \in \polyspace_Z$ is bounded above by $L(P)(q-1) - n\ell - m\ell$.  We will now construct a specific polynomial that reaches the bound, assuming that there is an integer $t$ with $m \leq t \leq \ell$ and $t| q-1$.  Recall that the set of nonzero elements of $\FF_q$ with the operation of multiplication form a cylcic group of order $q-1$, so there is an element $\alpha \in \FF_q^\times$ of order $t$, i.e. $\langle \alpha \rangle = \{ \alpha, \alpha^2, \dots, \alpha^t=1\}$ is a subgroup of $\FF_q^\times.$  Choose $\ell - t$ distinct elements $c_{t+1}, \dots, c_\ell \in \FF_q^\times \setminus  \{ \alpha, \alpha^2, \dots, \alpha^t=1\}$, and let $\mathcal{C} = \{ \alpha, \alpha^2, \dots, \alpha^t, c_{t+1}, \dots, c_\ell\}$, $\mathcal{B} = \{ \alpha, \alpha^2, \dots, \alpha^n\}$,  $\mathcal{A} = \{ \alpha, \alpha^2, \dots, \alpha^m\}$, and define 
 
 \[f = \prod_{a\in \mathcal{A}} (x-a)\prod_{b\in \mathcal{B}} (y-b)\prod_{c\in \mathcal{C}} (xy-c).\]
 Then
    \[|Z(f)| =  (m+n+\ell)(q-1) - (mn + m\ell + n\ell) + | Z(\prod_{a \in \mathcal{A}} (x-a)) \cap Z(\prod_{b \in \mathcal{B}} (y-b)) \cap Z(\prod_{c \in \mathcal{C}} (xy-c)) | \]
    
  Now, $Z(\prod_{a \in \mathcal{A}} (x-a)) \cap Z(\prod_{b \in \mathcal{B}} (y-b)) = \mathcal{A} \times \mathcal{B}$, and note that if $(a,b) \in \mathcal{A} \times \mathcal{B}$, say $a= \alpha^i$ for some $1\leq i \leq m$ and $b= \alpha^j$ for some $1 \leq j \leq n$, then  $ab = \alpha^{i+j} \in \langle \alpha \rangle \subseteq \mathcal{C}$. Thus any element  $(a,b) \in \mathcal{A} \times \mathcal{B}$ is a zero of $xy-ab$, hence a zero of  $\prod_{c \in \mathcal{C}} (xy-c)$.  Therefore, \[Z(\prod_{a \in \mathcal{A}} (x-a)) \cap Z(\prod_{b \in \mathcal{B}} (y-b)) \cap Z(\prod_{c \in \mathcal{C}} (xy-c)) = (\mathcal{A} \times \mathcal{B}) \cap  Z(\prod_{c \in \mathcal{C}} (xy-c)) =\mathcal{A} \times \mathcal{B},\] which has $mn$ elements.  Thus
   \[|Z(f)| =  (m+n+\ell)(q-1) - (mn + m\ell + n\ell) + mn = L(P)(q-1)-n\ell - m \ell.\]

  \end{proof}

\begin{corollary} \label{cor:smallest-maxl-min-dist}
    Let \[
        P \approx m[0,{e}_{1}] + n[0,{e}_{2}] + \ell[0,{e}_{1}+{e}_{2}] \subseteq [0,q-1]^2,
    \]
    where $0\leq n \leq m \leq \ell$, and suppose that there is an integer $t$ with $m \leq t \leq \ell$ such that $t| q-1$, then 
    \[
        d(C_{P}) = (q-1)^{2} - L(P)(q-1) +
            \ell(m+n)        
    \]
    when $q-1-M\sqrt{q} > \operatorname{Area}(P)$, where $M = \max \left\{ 2\operatorname{Area}(P)-1, 6 \right\}$.
\end{corollary}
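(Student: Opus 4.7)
The plan is to simply combine the definition of $d(C_P)$ with the exact formula for the maximum number of zeros of a nonzero polynomial in $\polyspace_P$ that was established in the preceding theorem. Recall that for any integral convex polytope $P\subseteq[0,q-2]^m\subset\RR^m$, the toric code $C_P$ has block length $(q-1)^m$ and minimum distance
\[
    d(C_P) \;=\; (q-1)^m \;-\; \max_{0\neq f\in\polyspace_P} |Z(f)|,
\]
as recorded in Section~\ref{sec:prelim}. In our situation $m=2$, so the block length is $(q-1)^2$.

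Next, I would verify that the hypotheses of Theorem~\ref{thrm:bound-maxl-max-zeros} are met. Since $P$ is lattice equivalent to $Z = m[0,\ee_1]+n[0,\ee_2]+\ell[0,\ee_1+\ee_2]$ with $0\leq n\leq m\leq \ell$, since $\operatorname{Area}(P)=\operatorname{Area}(Z)=mn+m\ell+n\ell$ is invariant under $\operatorname{AGL}(2,\ZZ)$, and since the divisibility hypothesis $t\mid q-1$ with $m\leq t\leq \ell$ is given, all hypotheses of the theorem hold. Note also that lattice equivalent polytopes define monomially equivalent codes and hence have equal minimum distance, so working with $Z$ in place of $P$ is harmless.

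Applying Theorem~\ref{thrm:bound-maxl-max-zeros} then yields
\[
    \max_{0\neq f\in\polyspace_P} |Z(f)| \;=\; L(P)(q-1) - n\ell - m\ell.
\]
Substituting into the definition of $d(C_P)$ gives
\[
    d(C_P) \;=\; (q-1)^2 - L(P)(q-1) + n\ell + m\ell \;=\; (q-1)^2 - L(P)(q-1) + \ell(m+n),
\]
which is the desired equality. The main obstacle, namely producing an explicit polynomial whose zero set meets the upper bound via the cyclic subgroup $\langle\alpha\rangle\subseteq\FF_q^\times$ of order $t$, has already been handled in the previous theorem, so here there is nothing further to do beyond this two-line deduction.
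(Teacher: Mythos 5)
Your proposal is correct and follows exactly the route the paper intends: the corollary is an immediate consequence of Theorem~\ref{thrm:bound-maxl-max-zeros} combined with the definition $d(C_P) = (q-1)^2 - \max_{0\neq f\in\polyspace_P}|Z(f)|$, and the paper leaves this deduction to the reader. The substitution and the check that lattice equivalence preserves both the area hypothesis and the minimum distance are exactly what is needed, so there is nothing to add.
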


\subsection{A Second Application}
As a second application, we will compute the minimum distance of a toric code given by a polytope in the plane which is the Minkowski sum of line segments and simplices.  We begin by determining the maximal decompositions in a scaled 2-simplex, $\Delta \subseteq \RR^2.$  

\begin{lemma} \label{lemma:maxl-decomp-in-simplex}
    Let $r > 0$ and $Z$ a  maximal decomposition in $r\Delta \subseteq \RR^2$, then  \[
    Z = ce_{2} + a[0,e_{1}] + b[0,e_{2}] + c[0,e_{1} - e_{2}] + d\Delta
    \]
    for some integers $ a,b,c,d \geq 0$ satisfying $a+b+c+d=r$.
\end{lemma}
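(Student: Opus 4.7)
The plan is to rule out exceptional-triangle summands via periodicity, and then to pin down all admissible summands up to translation using a single translation-invariant functional built from three support-function values.

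First, since $L(tr\Delta) = tr = tL(r\Delta)$ by Proposition~\ref{prop:SSMinkResults}(a), the simplex $r\Delta$ has period~$1$, so Proposition~\ref{prop:period-1-nicity} excludes any summand lattice equivalent to $\exc$. Writing $Z = Q_1 + \cdots + Q_r$ with each $Q_i$ non-trivial, the bound $\sum_i L(Q_i) \leq L(Z) = r$ together with $L(Q_i) \geq 1$ forces $L(Q_i) = 1$ for every $i$; by \cite[Theorem 1.4]{SS1}, each $Q_i$ is then a lattice translate of either a primitive segment or a triangle lattice equivalent to $\Delta$.

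Next, I would introduce the translation-invariant functional
\[
    S(Q) \;:=\; h_Q(e_1+e_2) + h_Q(-e_1) + h_Q(-e_2) \;=\; \max_{x \in Q}(x_1 + x_2) - \min_{x \in Q}(x_1) - \min_{x \in Q}(x_2).
\]
Additivity of support functions under Minkowski sum gives $S(Z) = \sum_i S(Q_i)$, while the containment $Z \subseteq r\Delta$ forces $S(Z) \leq S(r\Delta) = r$. The key step, which is also the main obstacle, is the claim that for any non-trivial $Q$ that is a primitive segment or a $\Delta$-type triangle, $S(Q) \geq 1$, with equality if and only if $Q$ is a lattice translate of exactly one of
\[
    [0, e_1], \qquad [0, e_2], \qquad e_2 + [0, e_1 - e_2], \qquad \Delta.
\]
To prove this I would translate $Q$ so that $\min x = \min y = 0$; then $S(Q) = \max_Q(x_1+x_2)$ and the equality case confines $Q$ to $\Delta$. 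A brief enumeration of primitive segments and $\Delta$-type triangles with lattice vertices in $\{(0,0),(1,0),(0,1)\}$ that touch both coordinate axes then leaves precisely those four polytopes.

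With $r$ summands each contributing $S(Q_i) \geq 1$ and total $\leq r$, every $S(Q_i) = 1$. Let $a, b, c, d \geq 0$ count the summands of each respective type, so $a + b + c + d = r$, and set
\[
    Z_0 \;:=\; ce_2 + a[0, e_1] + b[0, e_2] + c[0, e_1 - e_2] + d\Delta.
\]
Then $Z = Z_0 + p$ for some $p = \sum_i p_i \in \ZZ^2$. A direct computation gives $h_{Z_0}(-e_1) = h_{Z_0}(-e_2) = 0$ and $h_{Z_0}(e_1 + e_2) = a+b+c+d = r$; imposing $Z \subseteq r\Delta$ in these three directions then yields $p_1, p_2 \geq 0$ and $p_1 + p_2 \leq 0$, forcing $p = 0$ and hence $Z = Z_0$, as desired.
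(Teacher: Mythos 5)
Your proof is correct and takes a genuinely different, and in fact cleaner, route than the paper's. Where the paper first restricts to zonotope maximal decompositions and runs a multi-case analysis on the coordinate sums of the primitive direction vectors (choosing the base translation to have minimal coordinate sum, splitting on whether some $v_j$ has coordinate sum zero, and then separately forcing any $\Delta$-type summand to literally equal $\Delta$ by comparing against two zonotope replacements), you introduce the translation-invariant, Minkowski-additive functional $S(Q) = h_Q(e_1+e_2) + h_Q(-e_1) + h_Q(-e_2)$ and exploit that $S \geq 1$ on every non-trivial lattice polytope, with equality exactly when the normalized copy of $Q$ sits inside $\Delta$. The chain $r \leq \sum_i S(Q_i) = S(Z) \leq S(r\Delta) = r$ then forces equality termwise, pinning down each $Q_i$ up to translation in one stroke, and your support-function estimate in the three directions $-e_1$, $-e_2$, $e_1+e_2$ that kills the residual translation $p$ is a neat finish. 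The conceptual gain is that the paper's ad hoc coordinate-sum casework is replaced by a single monotone additive invariant whose equality case is easy to classify (the only non-trivial lattice subpolytopes of $\Delta$ are its three edges and $\Delta$ itself); both arguments use Proposition~\ref{prop:period-1-nicity} identically to exclude the exceptional triangle and both rely on $L(Q_i)=1$ together with \cite[Theorem~1.4]{SS1} to limit the summand types. One small note: you should record that the normalizing translation $(-\min_Q x_1, -\min_Q x_2)$ is a lattice vector (which it is, since the minima are attained at lattice vertices), so that ``lattice translate'' is justified; with that observed, the argument is complete.
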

\begin{proof}

    We begin by determining that all maximal decompositions in $r\Delta$ which are zonotopes must take the form in the statement with $d=0$.
    From there, we consider all possible maximal decompositions in $r\Delta$ and deduce that all unit triangles in a maximal decomposition are exactly $\Delta$.

    Let $Z$ be a maximal decomposition in $r\Delta$ which is a zonotope.
    Then, by \cite[Proposition 3.1]{SS1}, there can be at most four distinct summands in $Z$.
    Therefore, $Z$ must take the form: \[
        Z = a + \alpha_1[0,v_1] + \alpha_2[0,v_2] + \alpha_3[0,v_3] + \alpha_4[0,v_4]
    \]
    where $a \in Z$, $\sum_i \alpha_i = r$, and each $v_i \in \ZZ^2$ primitive.
    Furthermore, we choose $a = (a_1,a_2) \in Z$ such that $A := a_1+a_2$ is minimal among the lattice points of $Z$ (i.e. $a$ has minimal coordinate sum) and so that $a_1$ is the minimal $x$-coordinate among the lattice points $Z \cap \left\{ x+y=A \right\}$.
    Note that this choice of $a$ requires that the coordinate sum of each $v_i$ is at least $0$, else we'd contradict the minimality of $A$ in $Z$.
    Finally, much of our investigation will center around the vector \[
        w = a+\sum_i \alpha_iv_i \in Z \subseteq r\Delta
    \]
    and the fact that any lattice point in $r\Delta$ has a coordinate sum of at most $r$.
    
    Now, we consider two cases: either all $v_i$ have a positive coordinate sum or not.
    \begin{enumerate}
        \item Suppose that each $v_i$ has a positive coordinate sum.
        Then, the coordinate sum of $w$ is at least \[
            A + \sum_i \alpha_i = A + r
        \]
        as $Z$ is maximal in $r\Delta$ so $\sum_i\alpha_i = r$.
        But, since $w \in r\Delta$, the coordinate sum of $w$ is at most $r$, which gives us $A = 0$, and hence $a=0$ as $r\Delta \cap \left\{ x+y = 0\right\} = \left\{ 0 \right\}$.
        Moreover, this implies that the $x$-coordinate and $y$-coordinate of each $v_i$ must be greater than or equal to 0 as $a+v_i = v_i \in r\Delta$.

        Now, assume towards contradiction that there is some $v_j$ such that the coordinate sum of $v_j$ is at least 2.
        Then, we may alternatively bound the coordinate sum of $w$ below by \[
            \sum_{i \neq j} \alpha_i + 2\alpha_j = \sum_i \alpha_i + \alpha_j = r+\alpha_j.
        \]
        So we see that $\alpha_j = 0$ as the coordinate sum of $w$ is bounded above by $r$.
        Therefore, there are no $v_j$ with coordinate sum strictly greater than $1$ and $\alpha_i \neq 0$.
        Since each $v_i$ must have non-negative coordinates and a coordinate sum of at most $1$, we have that $v_i \in \left\{ e_1,e_2 \right\}$ as $0 \neq v_i \in \ZZ^2$.
        Thus, $Z$ takes the desired form.

        \item Suppose that there is a $v_j$ (with $\alpha_j \neq 0$) whose coordinate sum is $0$ and, without loss of generality, say $j=3$.
        Then we must have $v_3 = \pm(e_1 - e_2)$ as $v_3 \in \ZZ^2$ is primitive.
        Moreover, since $a + v_3 \in Z \cap \left\{ x+y=A \right\}$, we see that $v_3 = e_1-e_2$ as $a$ was chosen to have minimal $x$-coordinate amongst the lattice points in $Z \cap \left\{ x+y=A \right\}.$
        Moreover $\alpha_3 \le A$ since $\#(r\Delta \cap \left\{ x+y=A \right\}) = A + 1$.
        Furthermore, we have \[
            r = \sum_i \alpha_i \le A + \sum_{i \neq 3} \alpha_i  \le r
        \]
        where the upper bound comes from the fact that $A + \sum_{i \neq 3} \alpha_i$ is a lower bound on the coordinate sum of $w$.
        Since $A + \sum_{i \neq 3}\alpha_i = r = \sum_i \alpha_i$, we have that $\alpha_3 = A$ and $a=\alpha_3e_2$ as $a+\alpha_3v_3 \notin r\Delta$ otherwise.
        Knowing that $a=\alpha_3e_2$, we see that each $v_i$ must have a non-negative $x$-coordinate.
        
        Now, assume towards contradiction there is a $v_k$ with a coordinate sum at least $2$.
        Then the coordinate sum of $w$ is bounded below by \[
            A + \alpha_k + \sum_{i \neq 3} \alpha_i = \alpha_k + \sum_i \alpha_i = \alpha_k + r.
        \]
        As the coordinate sum of $w$ is bounded above by $r$, we have $\alpha_k=0$.
        Thus, there is no $v_k$ with coordinate sum strictly greater than 1 and $\alpha_k \neq 0$.
        
        Finally, assume towards contradiction that there is a $v_k$ with an $x$-coordinate of at least $2$.
        Then, the $y$-coordinate of $v_k$ is at most $-1$, but then $a+\alpha_3v_3+\alpha_kv_k \in Z \subseteq r\Delta$ has a $y$-coordinate of at most $-\alpha_k$.
        However all elements of $r\Delta$ have a non-negative $y$-coordinate, so $\alpha_k = 0$.
        Therefore, each $v_i$ ($i \neq 3$) with $\alpha_i \neq 0$ must have an $x$-coordinate of $0$ or $1$ and a coordinate sum of $1$ (to be distinct from $v_3$), so we must have $v_1=e_1$ and $v_2=e_2$, giving $Z$ the desired form.
    \end{enumerate}

    Now, take a maximal decomposition $Q$ in $r\Delta$ where $T = \conv\left\{ 0, u_1,u_2 \right\} \approx \Delta$ appears as a summand, ie $Q = Q' + T$ where $Q' \subseteq Q$ a subpolytope and $L(Q') = r-1$.
    Note that we can find a zonotope in $Q'$ with the same Minkowski length by \cite[Proposition 1.2]{SS1} so, without loss of generality, we may assume that $Q'$ is a zonotope.
    Now, we see that $Z_1 = Q' + [0,u_1]$ and $Z_2 = Q' + [0,u_2]$ are both zonotopes and maximal decompositions in $r\Delta$ so $u_1,u_2 \in \left\{ e_1,e_2,e_1-e_2 \right\}$ by our initial argument.
    Since $u_1,u_2$ are linearly independent and $T \approx \Delta$, we may assume, without loss of generality, that $u_1 \neq e_1-e_2$.
    Assume towards a contradiction that $u_2=e_1-e_2$ and write \[
        Z_1 = Q' + [0,u_1] = \alpha_3e_2 + \alpha_1[0,e_1] + \alpha_2[0,e_2] + \alpha_3[0,e_1-e_2] + [0,u_1]
    \]
    where $\sum_i \alpha_i = r-1$ which we may do by our initial argument and the fact that $u_1 \in \left\{ e_1,e_2 \right\}$.
    But then $Q' = \alpha_3e_2 + \alpha_1[0,e_1] + \alpha_2[0,e_2] + \alpha_3[0,e_1-e_2]$ so $\alpha_3e_2 + \alpha_3[0,e_1-e_2] + [0,u_2] \in Z_2$ has a $y$-coordinate of $-1$ which is a contradiction as $Z_2 \subseteq r\Delta$.
    Thus, we conclude that $T=\Delta$.
    
    Finally, since $r\Delta$ cannot contain an exceptional triangle in any maximal decomposition by Proposition \ref{prop:period-1-nicity} and the fact that $r\Delta$ is a period-1 polytope, the only summands in a maximal decomposition in $r\Delta$ are primitive line segments or lattice equivalent to $\Delta$ by \cite[Theorem 1.6]{SS1}.
    As we argued in the previous paragraph, any summand in a maximal decomposition that is lattice equivalent to $\Delta$ must, in fact, be $\Delta$.
    And, as we argued initially, the only primitive line segments that can occur are $[0,e_1]$, $[0,e_2]$, and $[0,e_1-e_2]$ (with a translation by $e_2$ from each $[0,e_1-e_2]$ occurring as a summand).
    Thus, each maximal decomposition in $r\Delta$ takes our desired form.
\end{proof}

\begin{lemma}\label{lem:special-quad-clipped-rect-max-decomp}
Let $P \subset \RR^{2}$ be an integral convex polytope which is lattice equivalent to \[
        Q = m[0,{e}_{1}] + n[0,{e}_{2}] + \ell[0,{e}_{1}+{e}_{2}] + s[0,{e}_{1}-{e}_{2}] + 2\ell\Delta
    \]
    where $m,n,s,\ell \ge 0$.  If $Z$ is a maximal decomposition in $Q$, then \[Z=
        s{e}_{2} + (m+2\ell)[0,{e}_{1}] + (n+2\ell)[0,{e}_{2}] + s[0,{e}_{1}-{e}_{2}] \approx (m+2\ell)[0,{e}_{1}] + (n+2\ell)[0,{e}_{2}] + s[0,{e}_{1}+{e}_{2}].
    \]   
\end{lemma}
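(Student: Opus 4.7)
The plan is to use the containment $(0,s)+Q \subseteq N\Delta$ with $N = m+n+s+4\ell$, which was the key inclusion established in the proof of Proposition~\ref{prop:quad-clipped-rect-minkowski} in the case $r \ge 2\ell$ (applied here with $r = 2\ell$), together with the fact that $L(Q) = m+n+s+4\ell = N$ given by the same proposition. If $Z \subseteq Q$ is any maximal decomposition, then $(0,s)+Z \subseteq N\Delta$, and since the lattice translation by $(0,s)$ preserves Minkowski length, $L((0,s)+Z) = L(Z) = L(Q) = N = L(N\Delta)$. Hence $(0,s) + Z$ is itself a maximal decomposition in $N\Delta$, and Lemma~\ref{lemma:maxl-decomp-in-simplex} applies.

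By that lemma, I can write
\[
   (0,s) + Z \;=\; ce_2 + a[0,e_1] + b[0,e_2] + c[0,e_1-e_2] + d\Delta
\]
for some nonnegative integers $a,b,c,d$ with $a+b+c+d = N$, and the remaining task is to determine these parameters. To do this, I would compare the support function of $(0,s)+Z$ against the facet inequalities of $(0,s)+Q$. A direct calculation shows $Q$ is cut out by the eight inequalities $x \ge 0$, $y \ge -s$, $x+y \ge 0$, $x \le m+s+3\ell$, $y \le n+3\ell$, $x+y \le m+n+4\ell$, $x-y \le m+2s+2\ell$, and $y-x \le n+2\ell$; translating by $(0,s)$ shifts three of them appropriately. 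Evaluating the support function of $(0,s)+Z$ in the directions $(-1,-1)$, $(1,-1)$, and $(-1,1)$, and using that each summand contributes linearly to the support function, produces the three key inequalities
\begin{align*}
   c &\ge s, \\
   a+c+d &\le m+s+2\ell, \\
   b+c+d &\le n+s+2\ell.
\end{align*}

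Adding the last two and subtracting $a+b+c+d = N = m+n+s+4\ell$ yields $c + d \le s$; combined with $c \ge s$ and $d \ge 0$, this forces $c = s$ and $d = 0$. Back-substituting gives $a \le m+2\ell$ and $b \le n+2\ell$, and together with $a+b = N - s = m+n+4\ell$ we must have $a = m+2\ell$ and $b = n+2\ell$. This uniquely determines the summands of $(0,s)+Z$, hence of $Z$. The stated lattice equivalence to $(m+2\ell)[0,e_1] + (n+2\ell)[0,e_2] + s[0,e_1+e_2]$ is then obtained by applying the involution $(x,y)\mapsto(x,-y)$ in $\operatorname{GL}(2,\ZZ)$ followed by a lattice translation by $(n+2\ell)e_2$, which sends $[0,e_1-e_2]$ to a translate of $[0,e_1+e_2]$ and leaves the two axis-parallel segments unchanged up to translation.

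The main obstacle is the bookkeeping in the support-function calculation: one must correctly evaluate $\max_{T}\langle u,\cdot\rangle$ for each of the five summands and for each relevant direction $u$, keeping careful track of signs (in particular, the contributions of $ce_2$ and $c[0,e_1-e_2]$ to the $x-y$ direction partially cancel). Once those three inequalities are in place, the rest is a short arithmetic argument in the spirit of Lemma~\ref{lemma:max-decomps-in-smallest-maxl}, where maximal decompositions inside the smallest maximal zonotope were pinned down by a similar chain of forced equalities.
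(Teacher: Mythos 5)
Your proposal is correct and follows essentially the same route as the paper: both use the containment $se_2 + Q \subseteq (m+n+s+4\ell)\Delta$, invoke Lemma~\ref{lemma:maxl-decomp-in-simplex} to parametrize $(0,s)+Z$, derive the same three inequalities $c \ge s$, $a+c+d \le m+s+2\ell$, $b+c+d \le n+s+2\ell$, and finish with the same arithmetic pinning down $c=s$, $d=0$, $a=m+2\ell$, $b=n+2\ell$. The only cosmetic difference is that you obtain the three inequalities systematically from support-function/facet comparisons, whereas the paper reads them off from specific extreme lattice points of $Z$ — these are the same computation in different dress.
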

\begin{proof}
    As $P \approx Q$, it is sufficient to show that this holds for $Q$ (see the black polytope in Figure \ref{fig:special-quad-clipped-rect-max-zeros}).
    Note that $L(Q) = m+n+s+4\ell$ by Proposition \ref{prop:quad-clipped-rect-minkowski} and $Q':= s{e}_{2} + Q \subset (m+n+s+4\ell)\Delta$ so if $Z$ is a maximal decomposition in $Q'$ then $Z$ is a maximal decomposition in $(m+n+s+4\ell)\Delta$.
    By Lemma \ref{lemma:maxl-decomp-in-simplex}, \[
        Z = c{e}_{2} + a[0,{e}_{1}] + b[0,{e}_{2}] + c[0,{e}_{1} - {e}_{2}] + d\Delta \subseteq  Q'
    \]
    where $a,b,c,d \ge 0$ and $a+b+c+d = m+n+s+4\ell$.
    Note that $Z$ is not contained in $ Q'$ if $c < s$, so we must have $c \ge s$.
    Furthermore, $c{e}_{2} + b{e}_{2} + d{e}_{2} \in Z$ so $b+c+d \le n+s+2\ell$ and $c{e}_{1} + a{e}_{1} + d{e}_{1} \in Z$ so $a+c+d \le m+s+2\ell$ which implies \[
        a+b+2(c+d) = m+n+s+4\ell+c+d \le m+n+2s+4\ell.
    \]
    Thus, we conclude that $c+d \le s$.
    As $c \ge s$, we must have that $d=0$ and $c=s$.
    This further implies $b \le n+2\ell$ and $a \le m+2\ell$, so $b=n+2\ell$ and $a=m+2\ell$ as $a+b+c < m+n+s+4\ell$ otherwise.
    Therefore, \[Z=
        s{e}_{2} + (m+2\ell)[0,{e}_{1}] + (n+2\ell)[0,{e}_{2}] + s[0,{e}_{1}-{e}_{2}] \approx (m+2\ell)[0,{e}_{1}] + (n+2\ell)[0,{e}_{2}] + s[0,{e}_{1}+{e}_{2}].
    \]
    Moreover, this is the only maximal decomposition in $Q$.

    \begin{figure}
        \includegraphics[scale=0.3]{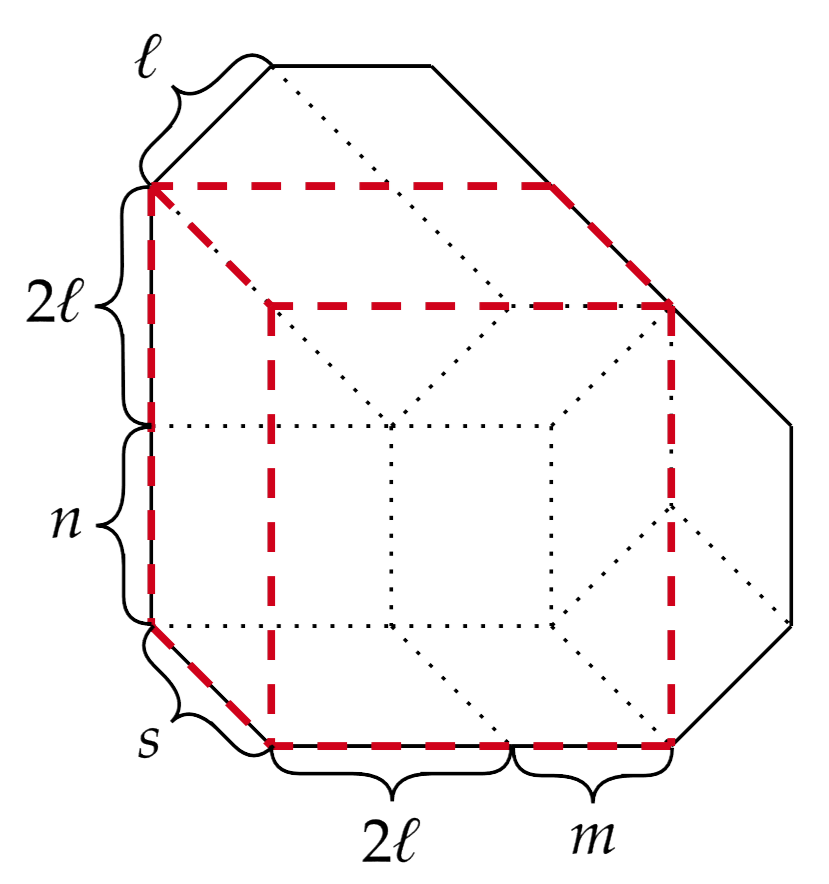}
        \caption{$Q$ (black) and $Z$ (red) from Lemma \ref{lem:special-quad-clipped-rect-max-decomp}}
        \label{fig:special-quad-clipped-rect-max-zeros}
    \end{figure}
\end{proof}

\begin{corollary} \label{cor:special-quad-clipped-rect-min-dist}
    Let \[
        Q = m[0,{e}_{1}] + n[0,{e}_{2}] + \ell[0,{e}_{1}+{e}_{2}] + s[0,{e}_{1}-{e}_{2}] + 2\ell\Delta \subseteq [0,q-1]^2
    \]
    where
 $0\leq n+ 2 \ell \leq m + 2\ell \leq s$, and there is an integer $t$ with $m + 2 \ell \leq t \leq s$ such that $t| q-1$.
    Then, \[
        d(C_{Q}) = (q-1)^{2} - L(Q)(q-1) + s(m+n+4\ell)
    \]
    when $q-1-M\sqrt{q} > s(2\ell+m) + s(2\ell+n) + (2\ell+m)(2\ell+n)$ where $M = \max \left\{ 2\operatorname{Area}(Q)-1, 6 \right\}$. 
\end{corollary}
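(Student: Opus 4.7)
The plan is to reduce the corollary to Theorem \ref{thrm:bound-maxl-max-zeros} (the equality version) by exploiting the fact established in Lemma \ref{lem:special-quad-clipped-rect-max-decomp} that the unique maximal decomposition in $Q$ (up to translation) is
\[
Z = se_2 + (m+2\ell)[0,e_1] + (n+2\ell)[0,e_2] + s[0,e_1-e_2] \approx (m+2\ell)[0,e_1] + (n+2\ell)[0,e_2] + s[0,e_1+e_2].
\]
Since $Z$ is a zonotope, no maximal decomposition in $Q$ contains the exceptional triangle as a summand, so the hypotheses of Proposition \ref{prop:polytope-giving-max-zeros} are satisfied for $Q$.

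First I would apply Proposition \ref{prop:polytope-giving-max-zeros} to a polynomial $g \in \polyspace_Q$ achieving the maximum number of zeros. The area hypothesis needed is $q-1 - M\sqrt{q} > \operatorname{Area}(Z)$, which matches the corollary's assumption after computing $\operatorname{Area}(Z) = (m+2\ell)(n+2\ell) + s(m+2\ell) + s(n+2\ell)$. The proposition then asserts that $g$ factors into exactly $L(Q)$ irreducible factors, so $P_g$ is a Minkowski sum of $L(Q)$ Minkowski-length-one polytopes, hence a maximal decomposition in $Q$. By Lemma \ref{lem:special-quad-clipped-rect-max-decomp}, $P_g$ must therefore be a translate of the standard-form polytope
\[
Z' := (m+2\ell)[0,e_1] + (n+2\ell)[0,e_2] + s[0,e_1+e_2].
\]
Multiplying $g$ by an appropriate Laurent monomial does not alter the zero set in $(\FF_q^\times)^2$, so without loss of generality $g \in \polyspace_{Z'}$, giving
\[
\max_{0 \neq g \in \polyspace_Q} |Z(g)| = \max_{0 \neq g \in \polyspace_{Z'}} |Z(g)|.
\]

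Next I would apply Theorem \ref{thrm:bound-maxl-max-zeros} directly to $Z'$ with the substitution $m \mapsto m+2\ell$, $n \mapsto n+2\ell$, $\ell \mapsto s$. The ordering $0 \leq n+2\ell \leq m+2\ell \leq s$ and the existence of $t$ with $m+2\ell \leq t \leq s$ and $t \mid q-1$ are exactly the hypotheses of the corollary. Using $L(Z') = m+n+s+4\ell = L(Q)$ from Proposition \ref{prop:quad-clipped-rect-minkowski}, the theorem yields
\[
\max_{0 \neq g \in \polyspace_{Z'}} |Z(g)| = L(Q)(q-1) - s(m+2\ell) - s(n+2\ell) = L(Q)(q-1) - s(m+n+4\ell).
\]
Substituting into $d(C_Q) = (q-1)^2 - \max_{0 \neq g \in \polyspace_Q} |Z(g)|$ yields the claimed formula.

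The argument is essentially bookkeeping once the uniqueness of maximal decomposition from Lemma \ref{lem:special-quad-clipped-rect-max-decomp} is in hand. The step I would write most carefully is the translation-invariance argument reducing $\polyspace_Q$ to $\polyspace_{Z'}$: one must verify that multiplying a polynomial by a Laurent monomial preserves the number of zeros in $(\FF_q^\times)^2$ and produces a polynomial supported on the chosen translate of $Z'$ inside $Q$. The remaining work is a straightforward relabeling of parameters in the previously established theorem, together with checking that $\operatorname{Area}(Q) \geq \operatorname{Area}(Z')$ so that the value of $M$ given in the corollary is at least as large as any value of $M$ needed by the intermediate propositions.
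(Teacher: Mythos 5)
Your overall strategy is the intended one and matches the paper's implicit argument: identify the unique maximal decomposition via Lemma \ref{lem:special-quad-clipped-rect-max-decomp}, invoke Proposition \ref{prop:polytope-giving-max-zeros} to show a max-zero polynomial $g \in \polyspace_Q$ has Newton polytope equal to that maximal decomposition, and then relabel parameters in Theorem \ref{thrm:bound-maxl-max-zeros} to extract the count. The arithmetic ($L(Q) = m+n+s+4\ell$, $\operatorname{Area}(Z') = (m+2\ell)(n+2\ell) + s(m+2\ell) + s(n+2\ell)$, and the matching of the $M$ and ordering hypotheses) is all correct.

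One step is phrased imprecisely and as written does not quite go through. You say that by Lemma \ref{lem:special-quad-clipped-rect-max-decomp}, $P_g$ ``must be a translate of the standard-form polytope $Z' = (m+2\ell)[0,e_1] + (n+2\ell)[0,e_2] + s[0,e_1+e_2]$'', and you then reduce to $\polyspace_{Z'}$ by multiplying $g$ by a Laurent monomial. But the actual maximal decomposition in $Q$ is
\[
Z = s e_2 + (m+2\ell)[0,e_1] + (n+2\ell)[0,e_2] + s[0,e_1-e_2],
\]
which is \emph{not} a translate of $Z'$; it is only \emph{lattice equivalent} to $Z'$, via a map in $\operatorname{AGL}(2,\ZZ)$ that sends $e_1 - e_2$ to $e_1 + e_2$ (not a pure translation). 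Multiplying by a Laurent monomial only accounts for the translation part. The correct bridge is the fact from the preliminaries that lattice equivalence yields monomially equivalent codes (Little--Schwarz), so $\max_{0 \neq g \in \polyspace_{Z}} |Z(g)| = \max_{0 \neq g \in \polyspace_{Z'}} |Z(g)|$; equivalently, you can apply Theorem \ref{thrm:bound-maxl-max-zeros} directly since it is stated for any $P$ lattice equivalent to the model zonotope. With this replacement the proof is complete.
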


While this method offers a new approach to computing minimum distance, all the applications of this method showcased are quite simple.
Namely, both examples had only one maximal decomposition and both of those maximal decompositions only contained (at most) three distinct summands.
While the fact that both examples only contained one maximal decomposition made the computation easier, a formula could still be easily derived if there were multiple maximal decompositions (all of which contained at most three distinct summands) by simply taking the maximum over all maximal decompositions.
However, the fact that both maximal decompositions contained at most three distinct summands made deriving a sharp bound on the maximum number of $\FF_{q}^{\times}$-zeros for polynomials whose Newton polytopes correspond to these maximal decompositions using an inclusion-exclusion argument much easier.
When a maximal decomposition contains four distinct summands, deriving a sharp upper bound on $\FF_{q}^{\times}$-zeros using an inclusion-exclusion argument becomes much more difficult.

For example, consider the ``clipped rectangle'' (see Figure \ref{fig:clipped-rect}) \[
    R = m[0,{e}_{1}] + n[0,{e}_{2}] + p\Delta \subseteq (m+n+p)\Delta
\]
where $m,n,p \ge 0$.
As $L(R)=m+n+p$ by Proposition \ref{prop:quad-clipped-rect-minkowski} and $R \subseteq (m+n+p)\Delta$, if $Z$ is a maximal decomposition in $R$ then \[
    Z = c{e}_{1} + a[0,{e}_{1}] + b[0,{e}_{1}] + c[0,{e}_{1}-{e}_{2}] + d\Delta 
\]
where $a,b,c,d \ge 0$ and $a+b+c+d = m+n+p$ by Lemma \ref{lemma:maxl-decomp-in-simplex}.
We may further restrict $Z$ to those so that $a \le m+p$, $b \le n+p$, and $c+d \le p$.
However, this still leaves $Z$ with four distinct summands unlike our previous examples. 
So, to handle polytopes of this form, we turn to a method developed by Little and Schwarz \cite{LSchwarz}.
\begin{figure}[h]
    \centering
    \begin{tikzpicture}[scale=1.3]
        \draw[line width=.4mm] (0,0) -- (2,0) -- (3,0) -- (3,2) -- (2,3) -- (0,3) -- (0,2) -- (0,0);
        \draw[dashed, line width=.3mm] (0,2) -- (2,2) -- (2,0);
        \draw[dashed, line width=.3mm] (2,2) -- (2,3);
        \draw[dashed, line width=.3mm] (2,2) -- (3,2);

        \draw[decorate, decoration={calligraphic brace,raise=1mm}, line width=.5mm] (2,0) -- node[below=1.5mm] {\large $m$} (0,0);
        \draw[decorate, decoration={calligraphic brace,raise=1mm}, line width=.5mm] (0,0) -- node[left=1.5mm] {\large $n$} (0,2);
        \draw[decorate, decoration={calligraphic brace,raise=1mm}, line width=.5mm] (0,2) -- node[left=1.5mm] {\large $p$} (0,3);
    \end{tikzpicture}
    \caption{The ``clipped rectangle''}
    \label{fig:clipped-rect}
\end{figure}
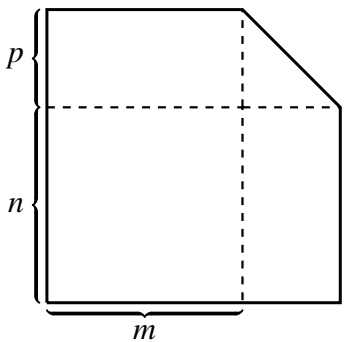

\section{Using Vandermonde Matrices to Compute Minimum Distance} \label{sec:ls-method}

Consider the integral convex polytope $P=\ell\Delta+\ell[0,e_1] +\ell[0,e_2]\subseteq [0,q-2]^2$, which, by Proposition \ref{prop:quad-clipped-rect-minkowski}, has period $1$. 
We will use the methods of \cite{LSchwarz} to bound the minimum distance of the code, $C_P$, corresponding to $P$ above and below to provide an explicit formula for the minimum distance of polytopes of the form shown above. 
Note that polytopes of this form have exactly $(2\ell+1)^2-\left(\frac{(\ell+2)(\ell+1)}{2}-(\ell+1)\right)$ lattice points, which we will denote as $\#P$.

Polytopes of the form $\ell\Delta+\ell[0,e_1] +\ell[0,e_2]$ contain the $2\ell \times \ell$ rectangle, and thus the minimum distance of the codes generated by these polytopes are always bounded above by the minimum distance of codes generated by the $2\ell\times\ell$ rectangular polytopes. 
By \cite[Theorem 3]{LSchwarz}, we that know the minimum distances of the codes generated by these $R_{2\ell\times\ell}$ polytopes is given by: $d(C_{R_{2\ell\times\ell}})=(q-1-2\ell)(q-1-\ell)$. Thus, we achieve our upper bound:  \[d(C_P)\leq (q-1-2\ell)(q-1-\ell)\]

To bound the minimium distance from below, we will use the method of Vandermonde matrices, introduced in \cite{LSchwarz}.

\begin{defn}
    Let $P \subset \RR^m$ be an integral convex polytope with the lattice points of $P$, $P\cap\ZZ^m=\{e(i):1\leq i \leq\#P\}$ listed in a particular order. Let $S=\{p_j:1\leq j \leq \#P\}$ be any ordered set of $\#P$ points in $(\FF_q^\times)^m$. Then the \emph{Vandermonde matrix}, $V(P;S)$, corresponding to $P$ and $S$ is the $\#P\times\#P$ matrix \[V(P;S)=\left(p_j^{e(i)}\right)\] where each $p_j^{e(i)}$ represents the value of the monomial $x^{e(i)}$ evaluated at the point $p_j$. 
    
\end{defn}

\begin{theorem}\cite[Proposition 1]{LSchwarz} \label{thm:LSProp1}
Let $P \subset \RR^m$ be an integral convex polytope. Let $d$ be a positive integer and assume that in every set $T \subseteq (\FF^\times_
q )^m$ with $|T | = (q - 1)^m - d + 1$ there exists
some $S \subseteq T$ with $|S| = \#P$ such that $\det V (P; S) \neq 0$. Then the minimum distance satisfies $d(C_P ) \geq d$.
\end{theorem}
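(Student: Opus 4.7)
The plan is to argue by contradiction: suppose there is a codeword of weight less than $d$, which corresponds to a nonzero $f \in \polyspace_P$ with too many zeros in $(\FF_q^\times)^m$, and then use the nonvanishing Vandermonde determinant to force all coefficients of $f$ to vanish.

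First, I would restate the desired inequality. By definition,
\[
    d(C_P) = (q-1)^m - \max_{0 \neq f \in \polyspace_P}|Z(f)|,
\]
so $d(C_P) \geq d$ is equivalent to the statement that every nonzero $f \in \polyspace_P$ satisfies $|Z(f)| \leq (q-1)^m - d$. Assuming the contrary, there is a nonzero $f \in \polyspace_P$ with $|Z(f)| \geq (q-1)^m - d + 1$.

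Next, I would extract a convenient test set. Write $f = \sum_{i=1}^{\#P} c_{e(i)} t^{e(i)}$ with coefficient vector $\mathbf{c} = (c_{e(1)},\ldots,c_{e(\#P)}) \neq \mathbf{0}$. Choose any $T \subseteq Z(f) \cap (\FF_q^\times)^m$ of size exactly $(q-1)^m - d + 1$. By the hypothesis of the theorem, there exists an ordered subset $S = \{p_1,\ldots,p_{\#P}\} \subseteq T$ with $\det V(P;S) \neq 0$.

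Then I would set up the linear system. Since $S \subseteq Z(f)$, for every $j$ we have
\[
    0 = f(p_j) = \sum_{i=1}^{\#P} c_{e(i)}\, p_j^{e(i)}.
\]
In matrix form this reads $V(P;S)^{\top}\mathbf{c} = \mathbf{0}$ (using the convention that the $(i,j)$-entry of $V(P;S)$ is $p_j^{e(i)}$). Because a square matrix is invertible if and only if its transpose is, the condition $\det V(P;S) \neq 0$ forces $\mathbf{c} = \mathbf{0}$, contradicting $f \neq 0$. This completes the argument and gives $d(C_P) \geq d$.

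The proof is essentially a clean translation between evaluation and linear algebra, so there is no serious obstacle; the only point that warrants care is making sure the indexing convention matches, i.e.\ noting that the linear system obtained from the vanishing conditions uses the transpose of $V(P;S)$, whose determinant is the same (up to sign) and in particular is nonzero under the hypothesis.
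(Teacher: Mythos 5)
Your proof is correct and is essentially the argument in Little and Schwarz (the paper cites this result without reproving it): pass to the complement, put enough zeros of a hypothetical low-weight codeword into a set $T$, use the hypothesis to extract $S$ with nonvanishing Vandermonde determinant, and conclude that the coefficient vector solves a nonsingular homogeneous system and hence vanishes. One tiny slip in the final remark: $\det(V(P;S)^{\top}) = \det V(P;S)$ exactly, not merely up to sign; this does not affect the argument, since all you need is that nonvanishing is preserved.
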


We will show that for every set $T\subset(\FF_q^\times)^2$ with $|T|=(q-1)^2-d+1$, where $d=(q-1-2\ell)(q-1-\ell)$, there exists some $S\subset T$ with $|S|=\#P$ such that the determinant of the Vandermonde matrix corresponding to $P$ and our set $S$ is nonzero, giving us the lower bound $d(C_P)\geq d$. 

We begin by defining a particularly nice configuration of points.  

\begin{defn}
    Let $\ell$ be a positive integer. A collection $S$ of $(2\ell+1)^2-\left(\frac{(\ell+2)(\ell+1)}{2}-(\ell+1)\right)$ points in $(\mathbb{F}_q^\times)^2$ is a \textbf{length} $\ell$ \textbf{staircase} configuration if the following conditions hold:
\begin{enumerate}
    \item There exist vertical lines $x=a_1,x=a_2,\ldots, x=a_{\ell+1}$ such that $S$ intersects each vertical line in exactly $2\ell+1$ lattice points.
    \item For each integer $j$, $0\leq j \leq \ell-1$ there exists a vertical line $x=b_j$ such that $S$ intersects $x=b_j$ in exactly $2\ell-j$ lattice points.
\end{enumerate}
\end{defn}

\begin{example} For $\ell=1$, a length $1$ staircase configuration will have $8$ points which are the form 
\[ (x_1,y_1), (x_1,y_2), (x_1, y_3), (x_2,y_4), (x_2,y_5), (x_2, y_6), (x_3,y_7), (x_3,y_8), \]
where $x_i,y_j \in \mathbb{F}_q^\times$, and the $x_i$ are unique.      
\end{example}

\begin{remark}
    If a set $S\subset (\FF_q^\times)^2$ satisfies the requirements to be a staircase configuration, then $|S|=(\ell+1)(2\ell+1)+\sum_{j=0}^{\ell-1}(2\ell-j)$.
\end{remark}

In order to apply Theorem \ref{thm:LSProp1} our set $S\subset T$ must have order equal to the number of lattice points in $P$, which we verify below.

\begin{corollary}

\[|S|=(2\ell+1)^2-\left(\frac{(\ell+2)(\ell+1)}{2}-(\ell+1)\right)= (\ell+1)(2\ell+1)+\sum_{j=0}^{\ell-1}(2\ell-j).\]
\end{corollary}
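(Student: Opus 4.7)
This is a purely algebraic identity, so the plan is to compute both sides explicitly and verify they agree. The first equality relates $|S|$ to the lattice points of $P = \ell\Delta + \ell[0,e_1] + \ell[0,e_2]$, and the second equality matches $|S|$ to the decomposition of the staircase configuration by vertical lines.

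First I would expand the left-hand side. Write
\[
(2\ell+1)^{2} - \left( \frac{(\ell+2)(\ell+1)}{2} - (\ell+1) \right) = (2\ell+1)^{2} - \frac{(\ell+1)\ell}{2},
\]
after factoring $(\ell+1)$ out of the parenthesized term. Expanding gives $4\ell^{2} + 4\ell + 1 - \tfrac{\ell^{2}+\ell}{2} = \tfrac{7\ell^{2}+7\ell+2}{2}$. This is just bookkeeping, so I would present it in one displayed line.

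Next I would tackle the right-hand side. The key step is evaluating the sum: reindexing via $k = 2\ell - j$ yields
\[
\sum_{j=0}^{\ell-1}(2\ell - j) = \sum_{k=\ell+1}^{2\ell} k = \frac{2\ell(2\ell+1)}{2} - \frac{\ell(\ell+1)}{2} = \frac{3\ell^{2}+\ell}{2}.
\]
Adding $(\ell+1)(2\ell+1) = 2\ell^{2}+3\ell+1$ then produces $\tfrac{7\ell^{2}+7\ell+2}{2}$, matching the left-hand side.

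There is no real obstacle here since the statement is a finite identity; the only minor subtlety is the convenient simplification $\tfrac{(\ell+2)(\ell+1)}{2} - (\ell+1) = \tfrac{(\ell+1)\ell}{2}$, which I would call out explicitly to keep the calculation transparent. The proof is essentially a two-line computation plus this small factoring observation.
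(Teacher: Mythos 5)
Your proposal is correct and follows essentially the same approach as the paper: expand both sides of the identity to the common value $\tfrac{7\ell^2+7\ell+2}{2}$ and compare. The only cosmetic difference is that you reindex the sum as $\sum_{k=\ell+1}^{2\ell}k$ and factor $(\ell+1)$ out of the parenthesized term, while the paper reverses the order of summation; both are straightforward bookkeeping.
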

\begin{proof}

Note that, by changing the order of summation, $\displaystyle \sum_{j=0}^{\ell-1}(2\ell-j)$ can be written as $\displaystyle\sum_{j=0}^{\ell-1}(\ell+1+j)$, thus we have:
\begin{eqnarray*}(\ell+1)(2\ell+1)+\sum_{j=0}^{\ell-1}(2\ell-j)&=& (\ell+1)(2\ell+1)+\sum_{j=0}^{\ell-1}(\ell+1+j)\\
&=&(\ell+1)(2\ell+1)+\ell(\ell+1)+\sum_{j=0}^{\ell-1}j\\
&=& (\ell+1)(2\ell+1)+\ell(\ell+1)+\binom{\ell}{2}\\
&=&(\ell+1)(2\ell+1)+\ell(\ell+1)+\frac{\ell}{2}(\ell-1)\\
&=& 2\ell^2+3\ell+1+\ell^2+\ell+\frac{1}{2}\ell^2-\frac{1}{2}\ell\\
&=&\frac{7}{2}\ell^2+\frac{7}{2}\ell+1\\
\end{eqnarray*}
and 
\begin{eqnarray*}(2\ell+1)^2-\left(\frac{(\ell+2)(\ell+1)}{2}-(\ell+1)\right) &=& 4\ell^2+4\ell+1-\left(\frac{\ell^2+3\ell+2}{2}-\ell-1\right) \\
&=& 4\ell^2+4\ell+1-\left(\frac{1}{2}\ell^2+\frac{1}{2}\ell\right)\\
&=&\frac{7}{2}\ell^2+\frac{7}{2}\ell+1\end{eqnarray*}

Thus, $|S|=\#P$.
\end{proof}

We will prove later that the determinant of the multivariate Vandermonde matrix corresponding to polytopes of the form $P=\ell\Delta+\ell[0,e_1] +\ell[0,e_2]$, and an appropriate length $\ell$ staircase configuration is always non-zero. Thus, in order to apply Theorem \ref{thm:LSProp1} we want to show that for every $T\subset(\FF_q^\times)^2$ with $|T|=(q-1)^2-d+1$, with $d=(q-1-2\ell)(q-1-\ell)$, that there exists a length $\ell$ staircase configuration, $S\subset T$.

\begin{lemma}\label{SinT}
     Every subset $T\subset(\mathbb{F}_q^\times)^2$ with $|T|=(q-1)^2-(q-1-2\ell)(q-1-\ell)+1=3\ell q -2\ell^2-3\ell+1$ contains a length $\ell$ staircase configuration.  

\end{lemma}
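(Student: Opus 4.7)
My plan is to count points column by column and reduce the statement to an optimization. For each $x \in \mathbb{F}_q^\times$, let
$c(x) = |\{y \in \mathbb{F}_q^\times : (x,y) \in T\}|$, so $0 \le c(x) \le q-1$ and $\sum_x c(x) = |T|$. List these values in non-increasing order as $c_1 \ge c_2 \ge \cdots \ge c_{q-1}$, and reindex the $x$-values accordingly. The first observation is a clean combinatorial criterion: $T$ contains a length-$\ell$ staircase configuration if and only if
\begin{equation*}
c_{\ell+1+k} \;\ge\; 2\ell+1-k \quad \text{for every } k \in \{0,1,\ldots,\ell\}.
\end{equation*}
The forward direction is immediate by sorting the $2\ell+1$ columns of a staircase by size. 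For the converse, use the top $\ell+1$ sorted columns (each has at least $2\ell+1$ points, so pick exactly $2\ell+1$) as the ``full" columns $x=a_1,\ldots,a_{\ell+1}$, and use the next $\ell$ sorted columns (the $k$-th of which has at least $2\ell+1-k$ points) as the staircase columns $x=b_{k-1}$, choosing exactly $2\ell-(k-1)$ points from each.

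Assume for contradiction that $T$ has no length-$\ell$ staircase. Then the criterion fails: there exists $k \in \{0,1,\ldots,\ell\}$ with $c_{\ell+1+k} \le 2\ell-k$. Since $c_i \le c_{\ell+1+k} \le 2\ell-k$ for every $i \ge \ell+1+k$, and trivially $c_i \le q-1$ for $i \le \ell+k$, we get
\begin{align*}
|T| \;=\; \sum_{i=1}^{q-1} c_i
&\;\le\; (\ell+k)(q-1) + \bigl((q-1)-(\ell+k)\bigr)(2\ell-k) \\
&\;=\; 3\ell(q-1) - (2\ell-k)(\ell+k) \\
&\;=\; 3\ell q - 3\ell - (2\ell-k)(\ell+k).
\end{align*}

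To finish, I minimize the quadratic $\varphi(k) = (2\ell-k)(\ell+k) = 2\ell^2 + \ell k - k^2$ over the integers $k \in \{0,1,\ldots,\ell\}$. Since $\varphi$ is a downward-opening parabola, the minimum on this interval occurs at an endpoint, and $\varphi(0) = \varphi(\ell) = 2\ell^2$. Therefore $(2\ell-k)(\ell+k) \ge 2\ell^2$, which yields the bound
\begin{equation*}
|T| \;\le\; 3\ell q - 3\ell - 2\ell^2 \;=\; 3\ell q - 2\ell^2 - 3\ell,
\end{equation*}
contradicting the hypothesis $|T| = 3\ell q - 2\ell^2 - 3\ell + 1$. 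Hence the staircase criterion holds and $T$ contains a length-$\ell$ staircase configuration.

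The only subtle step is recognizing that the expression $(2\ell-k)(\ell+k)$ is minimized at the endpoints rather than the interior of $\{0,\ldots,\ell\}$; this is precisely why the bound $3\ell q - 2\ell^2 - 3\ell + 1$ appears in the statement, and the argument is tight at both $k=0$ (a ``wide-short" failure: few tall columns) and $k=\ell$ (a ``few-tall" failure: insufficient short columns beyond the first $2\ell$). Everything else is straightforward once the column-count reformulation is in place.
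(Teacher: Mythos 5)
Your proof is correct and takes a noticeably cleaner route than the paper's.

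The paper proves the lemma by a greedy, iterative pigeonhole argument: it first shows $|T|>2\ell(q-1)$ forces one column with $\geq 2\ell+1$ points, then removes that column and repeats, checking the inequality
\[
(3\ell q - 2\ell^2-3\ell+1)-i(q-1)>2\ell(q-1-i)
\]
for each $i \in \{0,\dots,\ell\}$ to produce the $\ell+1$ full columns, and then a second family of inequalities
\[
|T| - (\ell+1+j)(q-1)>(2\ell-1-j)(q-2-\ell-j)
\]
for each $j\in\{0,\dots,\ell-1\}$ to produce the descending staircase columns. That amounts to $2\ell+1$ separate verifications (reduced to endpoint or monotonicity checks), and the paper has to keep track of which columns have already been ``used.''

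You instead sort the column counts $c_1\ge c_2\ge\cdots\ge c_{q-1}$ and observe the clean equivalence: $T$ contains a length-$\ell$ staircase if and only if $c_{\ell+1+k}\ge 2\ell+1-k$ for all $k\in\{0,\dots,\ell\}$. (Both directions are correct: the forward direction because the $\ell+1$ full columns together with $b_0,\dots,b_{k-1}$ give $\ell+1+k$ columns of $T$ with $\ge 2\ell+1-k$ points; the converse because you can peel off the top $2\ell+1$ sorted columns and take the prescribed number of points from each, and these columns are automatically distinct.) This turns the whole lemma into a single counting bound followed by minimizing $(2\ell-k)(\ell+k)$ over $k\in\{0,\dots,\ell\}$, which is minimized at the endpoints with value $2\ell^2$. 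Your version also makes it transparent why the constant $3\ell q-2\ell^2-3\ell+1$ is exactly the threshold: the bound is tight precisely at the two endpoint failures $k=0$ and $k=\ell$. The paper's iterative argument is equivalent in spirit but does not isolate this criterion, so it has to re-derive the pigeonhole estimate at each stage. Your approach is both shorter and more illuminating; the only thing worth adding for completeness is the (implicit, and guaranteed by $P\subseteq[0,q-2]^2$) remark that $q-1\ge 2\ell+1$, so that $c_{2\ell+1}$ is defined.
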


\begin{proof}
    In $(\mathbb{F}_q^\times)^2$, there are exactly $q-1$ choices for $x$. If we assume every $x$-coordinate belongs to $2\ell$ points, we can have at most $2\ell(q-1)$ points. Thus, by the pigeonhole principle, in order to show that there exists one line $x_0\subset T$ containing at least $2\ell+1$ lattice points, we need to show that $|T|>2\ell(q-1)$. 
    
 The expression $|T|=3\ell q-2\ell^2-3\ell+1>2\ell(q-1)$ is true for all $q>\frac{2\ell^2+\ell-1}{\ell}$, however recall that our polytope $P \subseteq [0,q-2]^2$, thus $q$ is already bounded by $q>2\ell+1$. Since $2\ell+1>\frac{2\ell^2+\ell-1}{\ell}$ we know that $|T|>2\ell(q-1)$ for all $q$. Thus, by the pigeonhole principle, $T$ must contain a line, $x=a_1$, with at least $2\ell+1$ lattice points. To show that $T$ contains a second line, $x=a_2$ with at least $2\ell+1$ points on it, we notice that there can be at most $q-1$ lattice points on $x=a_1$. Thus, $T$ has at least $|T|-(q-1)$ lattice points left after considering the first group of $2\ell+1$ points on $x=a_1$. When this is true, the line $x=a_1$ cannot contain more points, so there are $q-2$ possible choices for $x$ remaining. Thus, we need to show that $|T|-(q-1)>2\ell(q-2)$. We first consider the case when $\ell =1$.  Then $|T|-(q-1)=3q-2-3+1 -q+1=2q-3$, while $2\ell(q-2)=2q-4$, so $|T|-(q-1)>2(q-2)$.  If $\ell >1$, then to show the desired inequality, we note it is equivalent to show that $q>\frac{2\ell^2-\ell-2}{\ell-1}$, since:
\begin{align*}
3\ell q-2\ell^2-3\ell+1-(q-1)&>2\ell(q-2)\\
\ell q-q&>2\ell^2-\ell-2\\
q(\ell-1)&>2\ell^2-\ell-2\\
q&>\frac{2\ell^2-\ell-2}{\ell-1}    
\end{align*}

Because $q>2\ell +1$ it suffices to show that $2\ell+1>\frac{2\ell^2-\ell-2}{\ell-1}$, which in turn is equivalent to showing that $4\ell >-1$, since:

\begin{align*} 
2\ell+1&>\frac{2\ell^2-\ell-2}{\ell-1}\\
(\ell-1)(2\ell+1)&>2\ell^2-\ell-2\\
2\ell^2+3\ell-1&>2\ell^2-\ell-2\\
4\ell&>-1
\end{align*}
which is always true, because $\ell>1$. 

In general, to show that $T$ contains $\ell+1$ lines, $x_0,\ldots,x_{\ell}$ with at least $2\ell+1$ lattice points on each line, we must show that: \[(3\ell q - 2\ell^2-3\ell+1)-i(q-1)>2\ell(q-1-i),\text{with  } i\in \mathbb{Z}^+, 0\leq i \leq \ell\]
When $i=\ell$, we must verify: 
\[(3iq - 2i^2-3i+1)-i(q-1)>2i(q-1-i)\]
However, note the left-hand side is $2iq-2i^2-2i+1$, while the right-hand side is $2iq-2i^2-2i$, so the desired inequality is true.

When $i\neq\ell$, it suffices to show that $q>\frac{2\ell^2-2\ell i +\ell-i-1}{\ell-i}$, since:

\begin{align*}
(3\ell q - 2\ell^2-3\ell+1)-i(q-1)&>2\ell(q-1-i)\\
\ell q -iq+i-2\ell^2+1&>\ell-2\ell i\\
\ell q - iq&>2\ell^2-2\ell i +\ell -i-1\\
q(\ell-i)&>2\ell^2-2\ell i+\ell -i -1\\
q&>\frac{2\ell^2-2\ell i +\ell-i-1}{\ell-i}\\
\end{align*}

When $i\neq \ell$, when viewed as a function of $i$, the expression $\frac{2\ell^2-2\ell i +\ell-i-1}{\ell-i}$  is decreasing on the interval $0\leq i \leq \ell$  so the function achieves its maximum $i=0$. Thus, $q$ need only satisfy $q>\frac{2\ell^2+\ell-1}{\ell}$. Again, since $q$ is already bounded below by $q>2\ell+1$, it suffices to show that $2\ell+1>\frac{2\ell^2+\ell-1}{\ell}$. This is equivalent to $2\ell^2+\ell>2\ell^2+\ell-1$, which is always true. 

Thus, we know that for all $q>2\ell+1$, each subset $T\subset(\mathbb{F}_q^\times)^2$ must contain $\ell+1$ lines $x=a_1,\ldots,x=a_{\ell+1}$ each containing at least $2\ell+1$ lattice points. 

Each line $x=a_1,\ldots,x=a_{\ell+1}$ has at most $q-1$ lattice points, $|T|$ is at least $3\ell q -2\ell^2-3\ell+1-(\ell+1)(q-1)$ after considering each line. Furthermore, in the case where there are $q-1$ lattice points on each line $x=a_1,\ldots,x=a_{\ell+1}$, we are left with exactly $(q-1-(\ell+1))$ choices for $x$ in our finite field. Thus, to show that there exists one line $x=b_1$ containing $2\ell$ lattice points, we need to show that $|T| - (\ell+1)(q-1) >(2\ell-1)(q-1-(\ell+1))$.  However, note,
\begin{eqnarray*}
(2\ell-1)(q-1-(\ell+1)) &=& 2\ell q -2\ell^2-3\ell-q+2 \\
&<& 2\ell q -2\ell^2-2\ell-q+2 \\
&=& 3\ell q -2\ell^2-3\ell+1-(\ell+1)(q-1) \\
&=& |T| - (\ell+1)(q-1).
\end{eqnarray*}

By the pigeonhole principle, we know that there must exist a line $x=b_1$ with at least $2\ell$ lattice points on it. Thus, in order to show that for each integer $j$, $0\leq j \leq \ell-1$ there exists a line $x=b_j$ such that each line contains $2\ell-j$ points, we must show that the following bound is true:

\begin{align*}
    |T| - (\ell+1+j)(q-1)&>(2\ell-1-j)(q-2-\ell-j)\\
    (-2\ell^2-q+2\ell q -2\ell +2)-j(q-1)&>(2\ell-1-j)(q-2-\ell-j)
\end{align*}

Multiplying out the left and right hand side, we see this is 
algebraically equivalent to showing $$\ell>j(-\ell+2+j).$$

Now, the expression $-\ell+2+j$ is always non-positive for $0\leq j<\ell-1$, so $j-\ell+2<\ell-1-\ell+2$, which implies 
$-\ell+2+j<1.$ Thus $\ell>j(-\ell+2+j)$ for $0\leq j<\ell-1$. 
On the other hand, if $j=\ell-1$, we have $j(-\ell + 2+j)=(\ell-1)(1)< \ell.$
\end{proof}

\begin{proposition}\label{prop:non-zero-det}
    Let $P$ be an integral convex polytope in $\RR^2$ such that $P \approx \ell\Delta+ \ell [0,e_1] +\ell[0,e_2]$ and let $S$ be a length $\ell$ staircase configuration with $|S|=\#P$. Then the Vandermonde matrix $V(P;S)$ corresponding to $P$ and our set $S$ has non-zero determinant. 
\end{proposition}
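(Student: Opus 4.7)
The plan is to prove the contrapositive: show that the only $f \in \polyspace_P$ vanishing on every point of $S$ is $f \equiv 0$. Since $V(P;S)$ is the matrix of the evaluation map $\polyspace_P \to \FF_q^{\#P}$ in the monomial basis, this injectivity is equivalent to $\det V(P;S) \neq 0$. Because lattice-equivalent polytopes give rise to monomially equivalent codes, it suffices to treat $P = \ell\Delta + \ell[0,e_1] + \ell[0,e_2]$; the lattice points of this pentagon organize by $x$-coordinate into $\ell+1$ columns of height $2\ell+1$ (at $x = 0, 1, \dots, \ell$) followed by columns of heights $2\ell, 2\ell-1, \dots, \ell+1$ (at $x = \ell+1, \dots, 2\ell$), exactly matching the column profile of the staircase $S$. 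We exploit this matching to extract linear factors in $x$ from any hypothetical $f$.

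Suppose $f = \sum_{(a,b) \in P \cap \ZZ^2} c_{ab}\, x^a y^b$ vanishes on every point of $S$. For each tall column $x = a_s$ ($s = 1, \dots, \ell+1$), the univariate polynomial $f(a_s, y)$ has $y$-degree at most $2\ell$ and vanishes at $2\ell+1$ distinct nonzero values, so $f(a_s, y) \equiv 0$. Hence $A(x) := \prod_{s=1}^{\ell+1}(x - a_s)$ divides $f$ in $\FF_q[x,y]$, and we may write $f = A(x)\,g(x,y)$ with $g = \sum_b q_b(x) y^b$. Since the $y^b$-coefficient of $f$ has $x$-degree at most $2\ell$ for $b \le \ell$ and at most $3\ell - b$ for $b \ge \ell+1$, dividing by $A$ (of degree $\ell+1$) yields $\deg q_b \le \ell-1$ for $0 \le b \le \ell$, $\deg q_b \le 2\ell-1-b$ for $\ell+1 \le b \le 2\ell-1$, and $q_{2\ell} \equiv 0$.

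The remainder of the argument peels off the short columns iteratively. Define $g_0 := g$, and for $k = 0, 1, \dots, \ell-1$, carry out the following inductive step. By the inductive hypothesis $\deg_y g_k \le 2\ell-1-k$. The column $x = b_k$ contributes $2\ell-k$ zeros of $f$ in $y$; since $A(b_k)\prod_{j<k}(b_k - b_j) \ne 0$ (the $x$-coordinates of $S$ are distinct), $g_k(b_k, y)$ vanishes at these $2\ell-k$ values, and as its $y$-degree is at most $2\ell-1-k < 2\ell-k$, it is identically zero in $y$. Thus $(x-b_k) \mid g_k$, and we may set $g_{k+1} := g_k/(x-b_k)$. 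The top $y$-coefficient of $g_k$, namely $q_{2\ell-1-k}^{(k)}(x)$, has $x$-degree $0$ by the inductive bound, hence vanishes after division; this forces $\deg_y g_{k+1} \le 2\ell-2-k$, closing the induction. After $\ell$ such steps, the degree bounds on all coefficients of $g_\ell$ are negative, so $g_\ell \equiv 0$ and hence $f \equiv 0$.

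The main obstacle is the careful bookkeeping of $y$-degree bounds in the iterative step: we need the $y$-degree of $g_k$ to drop strictly at each stage so that the next, shorter column has more zeros than $g_k(b_k,y)$ can accommodate. This drop is dictated by the shape of the Newton polytope of $g$, which ensures that the surviving top-$y$-coefficient of $g_k$ is always a constant polynomial in $x$, and therefore must vanish upon dividing out the next linear factor $(x - b_k)$.
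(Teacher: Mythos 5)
Your proof is correct, and it takes a dual route to the paper's. You reduce the claim to showing that the evaluation map $\polyspace_{P}\to\FF_{q}^{\#P}$, $f\mapsto(f(p))_{p\in S}$, has trivial kernel — equivalent to $\det V(P;S)\neq 0$ since $V(P;S)$ is the matrix of this map in the monomial basis — and you establish injectivity by iteratively extracting linear factors in $x$ from any $f$ vanishing on $S$, first the $\ell+1$ factors $(x-a_{s})$ from the tall columns and then the factors $(x-b_{k})$ one at a time, each time using the pentagon's $y$-degree profile to force the top $y$-coefficient (a constant in $x$) to vanish. The paper proves the same statement in matrix language: it builds a unipotent lower-triangular matrix $A$ whose rows record the coefficients of $\prod_{i=1}^{x'}(x-x_{i})$, so that $A\cdot V(P;S)$ is upper block triangular with each diagonal block a nonzero scalar multiple of a univariate Vandermonde matrix in the $y$-variables. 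That factorization is precisely the change of basis from $\{x^{a}y^{b}\}$ to $\{\prod_{i\leq a}(x-x_{i})\,y^{b}\}$, which your successive polynomial divisions implement one coefficient at a time. Your phrasing buys conciseness and avoids the matrix bookkeeping; the paper's buys an explicit block-triangular factorization and an expression of $\det V(P;S)$ as a product of Vandermonde subdeterminants, which is a reusable template for other polytopes and point configurations whose column profiles match in the same staircase fashion.
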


\begin{proof}
Let $P \approx \ell\Delta+\ell[0,e_1] +\ell[0,e_2]$ and $S$ a length $\ell$ staircase configuration with $|S| = \#P$.    The general strategy is to use multiplication by a matrix $A$ to perform the row operations needed to make $V(P;S)$ upper block triangular. Then, we track the entries of the product $A \cdot V(P;S)$ to express the determinant of $A \cdot V(P;S)$ as a product of determinants of Vandermonde matrices up to some non-zero scalar. 

We begin by sorting the lattice points of $P$ (row labels of $V(P;S)$) in ascending $x$-value then by ascending $y$-value. Since $S$ is a length $\ell$ staircase configuration, we know that there are $\ell+1$ sets of $2 \ell +1$ of the points in $S$ which have a common $x$-coordinate, which we'll label as $x_1, x_2, \ldots, x_{\ell+1}$; and for $0 \leq j \leq \ell-1$, there are sets of $2\ell-j$ points in $S$ that have a common $x$-coordinate, which we'll label as $x_{\ell+2+j}$.  In our matrix, we group columns corresponding to the same $x_i$ value in $\FF_q^\times$ in the order of $x_1, x_2, \cdots, x_{2\ell+1}$. This has no effect on the determinant other than a possible change in sign. As an example, here is the $V(P;S)$ matrix when $\ell=1$ with sorted labels.
\[V(P;S) = 
  \bordermatrix{ & (x_1,y_1) & (x_1,y_2) & (x_1,y_3) & (x_2,y_4) & (x_2,y_5) & (x_2,y_6) & (x_3,y_7) & (x_3,y_8) \cr
    (0,0) & 1 & 1 & 1 & 1 & 1 & 1 & 1 & 1 \cr
     (0,1) & y_{1} & y_{2} & y_{3} & y_{4} & y_{5} & y_{6} & y_{7} & y_{8} \cr
     (0,2) & y_{1}^{2} & y_{2}^{2} & y_{3}^{2} & y_{4}^{2} & y_{5}^{2} & y_{6}^{2} & y_{7}^{2} & y_{8}^{2}  \cr
     (1,0) & x_{1} & x_{1} & x_{1} & x_{2} & x_{2} & x_{2} & x_{3} & x_{3} \cr
     (1,1) & x_{1} y_{1} & x_{1} y_{2} & x_{1} y_{3} & x_{2} y_{4} & x_{2} y_{5} & x_{2} y_{6} & x_{3} y_{7} & x_{3} y_{8} \cr
     (1,2) &x_{1} y_{1}^{2} & x_{1} y_{2}^{2} & x_{1} y_{3}^{2} & x_{2} y_{4}^{2} & x_{2} y_{5}^{2} & x_{2} y_{6}^{2} & x_{3} y_{7}^{2} & x_{3} y_{8}^{2} \cr
      (2,0) &x_{1}^{2} & x_{1}^{2} & x_{1}^{2} & x_{2}^{2} & x_{2}^{2} & x_{2}^{2} & x_{3}^{2} & x_{3}^{2}  \cr
     (2,1) & x_{1}^{2} y_{1} & x_{1}^{2} y_{2} & x_{1}^{2} y_{3} & x_{2}^{2} y_{4} & x_{2}^{2} y_{5} & x_{2}^{2} y_{6} & x_{3}^{2} y_{7} & x_{3}^{2} y_{8} } \qquad
\]

Next, we define a  $\#P \times \#P$ matrix $A$, beginning by indexing both the rows and columns of $A$ by the lattice points of $P$ in the same order as the rows of $V(P;S)$. Let the entry corresponding to row $(x',y')$ and column $(x'',y'')$ be the coefficient of $x^{x''}$ in the polynomial $\prod_{i=1}^{x'} (x-x_i)$ so long as $y'=y''$ and $x'' \leq x' \neq 0$. Here $x$ is a dummy variable, whereas each $x_i$ comes from the staircase configuration. When $y'=y''$ and $x'' \leq x' = 0$, let the entry be $1$. Otherwise, let the entry be $0$. 

For example, consider the entry in the labeled $A$ matrix when $\ell=1$ that corresponds to row $(2,1)$ and column $(0,1)$ has $x' = 2$, $x'' = 0$, and $y'=y''=1$. Since $x'' \leq x' \neq 0$ and $y'=y''$, the entry will be the coefficient of $x^{x''} = x^0 = 1$ in the polynomial $\prod_{i=1}^{2} (x-x_i) = (x-x_1)(x-x_2) = x^2-(x_1+x_2)+x_1x_2$. Therefore, this entry is $x_1x_2$. The entirety of the labeled $A$ matrix when $\ell =1$ is shown below:

\[A=
  \bordermatrix{ & (0,0) & (0,1) & (0,2) & (1,0) & (1,1) & (1,2) & (2,0) & (2,1) \cr
    (0,0) & 1 & 0 & 0 & 0 & 0 & 0 & 0 & 0 \cr
     (0,1) & 0 & 1 & 0 & 0 & 0 & 0 & 0 & 0 \cr
     (0,2) & 0 & 0 & 1 & 0 & 0 & 0 & 0 & 0  \cr
     (1,0) & -x_{1} & 0 & 0 & 1 & 0 & 0 & 0 & 0 \cr
     (1,1) & 0 & -x_{1} & 0 & 0 & 1 & 0 & 0 & 0 \cr
     (1,2) & 0 & 0 & -x_{1} & 0 & 0 & 1 & 0 & 0 \cr
      (2,0) & x_{1} x_{2} & 0 & 0 & -x_{1} - x_{2} & 0 & 0 & 1 & 0  \cr
     (2,1) & 0 & x_{1} x_{2} & 0 & 0 & -x_{1} - x_{2} & 0 & 0 & 1 } \qquad
\]

By construction, $A$ will be lower triangular with $1$s along the diagonal, as all diagonal elements correspond to the same row and column label $(x',y')$. If $x \neq 0$, the entry must be the coefficient of $x^{x'}$ in $\prod_{i=1}^{x'} (x-x_i)$, which is $1$. If $x' = 0$, the entry is automatically $1$. Any upper triangular entries not along the diagonal correspond to row $(x',y')$ and column $(x'',y'')$ with $x' \leq x''$ and $y' < y''$ or $x' < x''$ and $y' \leq y''$, making these entries $0$. Thus, $\det A = 1$, so to determine $\det V(P;S)$, it is sufficient to determine $ \det (A \cdot V(P;S))$.

We now consider the entries of $A \cdot V(P;S)$. By construction, each entry of $A \cdot V(P;S)$ corresponds to a lattice point identifying its row and some element of $ (\mathbb{F}_q^\times)^2$ identifying its column. More specifically, the entry of $A \cdot V(P;S)$ corresponding to lattice point $(x',y')$ and element $(x_\alpha,y_\beta) \in (\mathbb{F}_q^\times)^2$ is the inner product between the row of $A$ corresponding to $(x',y')$ and the column of $V(P;S)$ corresponding to $(x_\alpha,y_\beta)$. When $x' = 0$, the inner product gives $1 \cdot x_\alpha^{x'}y_\beta^{y'} = y_\beta^{y'}$. If $x' > 0$, then the inner product gives $\sum_{j=0}^{x'} [x^j] x_\alpha^j  y_\beta^{y'} $ where $[x^j]$ denotes the coefficient of $x^j$ in the polynomial $\prod_{i=1}^{x'} (x-x_i)$ with each $x_i$ coming from the staircase configuration. This can be seen because the only non-zero entries in the row of $A$ must all correspond to lattice points with $y$-value $y'$ and whose $x$-value ranges from $0$ to $x'$. These entries all have the form $[x^j]$ where $j$ is the $x$-value of the corresponding lattice point. As the columns of $V(P;S)$ are indexed by the same indices as the rows of $A$, the only entries of $V(P;S)$ preserved in the inner product are ones of the form $x_\alpha^jy_\beta^{y'}$. After establishing this inner product, we rewrite it: 
\[\sum_{j=0}^{x'} [x^j] x_\alpha^j  y_\beta^{y'} = y_\beta^{y'} \sum_{j=0}^{x'} [x^j] x_\alpha^j  = y_\beta^{y'} \prod_{i=1}^{x'} (x_\alpha-x_i).\]

As an example in the $\ell=1$ case, consider the entry of $A \cdot V(P;S)$ generated from the inner product between the row of $A$ corresponding to lattice point $(0,2)$ and column of $V(P;S)$ corresponding to $(x_1,y_3)$ (that is, $x'=0,y'=2,x_\alpha = x_1,y_\beta = y_3$). By construction, the only non-zero element of the column is $y_3^2$ as it corresponds to lattice point $(0,2)$. The corresponding entry in the row is $1$ by construction, so the inner product gives $y_3^2$. As another example in this case, consider the entry of $A \cdot V(P;S)$ generated from the inner product between the row of $A$ corresponding to lattice point $(1,1)$ and column of $V(P;S)$ corresponding to $(x_2,y_6)$ (that is, $x'=y'=1, x_\alpha = x_2,y_\beta = y_6$). This gives $-x_1(y_6) + 1(x_2y_6) = y_6(x_2-x_1)$, as predicted by the general formula.   

This will allow us to establish $A \cdot V(P;S)$ as upper block triangular. An entry $A \cdot V(P;S)$ with row corresponding to a lattice point $(x,y)$ of $P$ and column corresponding to some $(x_\alpha,y_\beta) \in (\FF_q^\times)^2$ is on the lower part of the matrix if $x \geq \alpha$. We only need to address entries of the form $y_\beta^{y} \prod_{i=1}^{x} (x_\alpha-x_i)$ because $\alpha$ never equals $0$. However, since $x \geq \alpha$, these entries equal $0$ because they contain the factor $(x_\alpha-x_\alpha) = 0$. Therefore, to find the determinant of $V(P;S)$, we just have to consider the determinant of each matrix along the block diagonal. 

We have $2 \ell+1$ square matrices along the diagonal of $A \cdot V(P;S)$. Each of these matrices $B_d$ corresponds to rows indexed by lattice points of the form $(d-1,y)$ and columns indexed by elements of $(\FF_q^\times)^2$ of the form $(x',y_\beta)$ where $1 \leq d \leq 2 \ell+1$. Note that for a given $B_d$, $y$ and $y_\beta$ vary whereas $d-1$ and $x'$ are fixed. 

As an example, when $\ell=1$, $A \cdot V(P;S)$ is written below:

\[
\footnotesize
  \bordermatrix{ & (x_1,y_1) & (x_1,y_2) & (x_1,y_3) & (x_2,y_4) & (x_2,y_5) & (x_2,y_6) & (x_3,y_7) & (x_3,y_8) \cr
    (0,0) & 1 & 1 & 1 & 1 & 1 & 1 & 1 & 1 \cr
     (0,1) & y_{1} & y_{2} & y_{3} & y_{4} & y_{5} & y_{6} & y_{7} & y_{8} \cr
     (0,2) & y_{1}^{2} & y_{2}^{2} & y_{3}^{2} & y_{4}^{2} & y_{5}^{2} & y_{6}^{2} & y_{7}^{2} & y_{8}^{2}  \cr
     (1,0) & 0 & 0 & 0 & -x_{1} + x_{2} & -x_{1} + x_{2} & -x_{1} + x_{2} & -x_{1} + x_{3} & -x_{1} + x_{3} \cr
     (1,1) & 0 & 0 & 0 & -{\left(x_{1} - x_{2}\right)} y_{4} & -{\left(x_{1} - x_{2}\right)} y_{5} & -{\left(x_{1} - x_{2}\right)} y_{6} & -{\left(x_{1} - x_{3}\right)} y_{7} & -{\left(x_{1} - x_{3}\right)} y_{8} \cr
     (1,2)&  0 & 0 & 0 & -{\left(x_{1} - x_{2}\right)} y_{4}^{2} & -{\left(x_{1} - x_{2}\right)} y_{5}^{2} & -{\left(x_{1} - x_{2}\right)} y_{6}^{2} & -{\left(x_{1} - x_{3}\right)} y_{7}^{2} & -{\left(x_{1} - x_{3}\right)} y_{8}^{2} \cr
      (2,0) & 0 & 0 & 0 & 0 & 0 & 0 & {\left(x_{1} - x_{3}\right)} {\left(x_{2} - x_{3}\right)} & {\left(x_{1} - x_{3}\right)} {\left(x_{2} - x_{3}\right)}  \cr
     (2,1)&  0 & 0 & 0 & 0 & 0 & 0 & {\left(x_{1} - x_{3}\right)} {\left(x_{2} - x_{3}\right)} y_{7} & {\left(x_{1} - x_{3}\right)} {\left(x_{2} - x_{3}\right)} y_{8}} \qquad
\]

For the first block $B_1$ on the diagonal, each entry corresponding to lattice point $(0,y)$ and tuple $(x_1, y_\beta)$ has form  $y_\beta^{y}$. Since the value of $y$ starts at $0$ and increases by one with each row as $y_\beta$ remains constant along each column, $B_1$ is a Vandermonde matrix. The staircase configuration guarantees each $y_\beta$ to be distinct, so $\det B_1 \neq 0$. 

For all other diagonal blocks, the entries of $B_d$ will be of the form  $y_\beta^{y} \prod_{i=1}^{d-1} (x'-x_i)$ with row corresponding to lattice point $(d-1,y)$ and column corresponding to tuple $(x',y_\beta)$. Since $x'$ is constant and $x' \neq x_i$ when $d \neq i$, we can always factor out a non-zero factor $\prod_{i=1}^{d-1} (x'-x_i)$ from each column at the cost of changing the determinant by a scalar. This leaves a Vandermonde matrix, again because the value of $y$ begins at zero and increases by one with each row as $y_\beta$ remains constant along each column. Further, this Vandermonde matrix always has nonzero determinant because each $y_\beta$ is distinct by the staircase configuration.

Therefore, every block matrix along the diagonal of $A \cdot V(P;S)$ has nonzero determinant, so $\det V(P;S) \neq 0$. 

\end{proof}

\begin{corollary}\label{cor:Van-Min-Dist}
    Let $P \approx \ell\Delta+ \ell [0,e_1] +\ell[0,e_2]$ for some positive integer $\ell$, with $P \subseteq [0,q-1]^2.$ Then the toric code $C_P$ has minimum distance
    \[d(C_P)= (q-1-2\ell)(q-1-\ell)\]
\end{corollary}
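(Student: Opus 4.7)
The plan is to sandwich $d(C_P)$ between matching upper and lower bounds, relying entirely on results already assembled in the section. The upper bound has essentially been established in the paragraph preceding Lemma~\ref{SinT}: since $P \approx \ell\Delta + \ell[0,e_1] + \ell[0,e_2]$ contains a $2\ell \times \ell$ rectangular subpolytope $R_{2\ell\times\ell}$, the evaluation space $\polyspace_{R_{2\ell\times\ell}}$ sits inside $\polyspace_P$, and so a minimum weight nonzero codeword for the rectangular code provides an element of $\polyspace_P$ whose zero set achieves the extremal count. By \cite[Theorem 3]{LSchwarz}, this yields $d(C_P) \leq d(C_{R_{2\ell\times\ell}}) = (q-1-2\ell)(q-1-\ell)$. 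I would simply cite this chain in one sentence.

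For the lower bound, the strategy is to invoke Theorem~\ref{thm:LSProp1} with $d = (q-1-2\ell)(q-1-\ell)$. I would fix an arbitrary $T \subseteq (\FF_q^\times)^2$ with $|T| = (q-1)^2 - d + 1 = 3\ell q - 2\ell^2 - 3\ell + 1$. Lemma~\ref{SinT} produces a length $\ell$ staircase configuration $S \subseteq T$, and we have already verified (in the corollary following the definition of staircase configuration) that $|S| = \#P$. Proposition~\ref{prop:non-zero-det} then guarantees $\det V(P;S) \neq 0$. Since $T$ was arbitrary, the hypothesis of Theorem~\ref{thm:LSProp1} is satisfied, giving $d(C_P) \geq (q-1-2\ell)(q-1-\ell)$.

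Combining the two inequalities yields the equality $d(C_P) = (q-1-2\ell)(q-1-\ell)$, as claimed. Since $P \approx \ell\Delta + \ell[0,e_1] + \ell[0,e_2]$ and lattice equivalent polytopes produce monomially equivalent codes with identical minimum distance (as noted in Section~\ref{sec:prelim}), the result transfers from the canonical representative to $P$. There is essentially no obstacle at this stage of the argument: all the hard work has been done in Lemma~\ref{SinT} and Proposition~\ref{prop:non-zero-det}. The only care required is to confirm that the size condition in Lemma~\ref{SinT} matches the block length minus $d+1$ required by Theorem~\ref{thm:LSProp1}, which is immediate from $(q-1)^2 - (q-1-2\ell)(q-1-\ell) + 1 = 3\ell q - 2\ell^2 - 3\ell + 1$, and to check that the hypothesis $P \subseteq [0,q-1]^2$ keeps us in the regime $q > 2\ell + 1$ used implicitly in Lemma~\ref{SinT}. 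With those remarks, the proof reduces to a two-line citation of the preceding results.
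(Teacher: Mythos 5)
Your proof is correct and is exactly the synthesis the paper builds toward: the upper bound via containment of the $2\ell\times\ell$ rectangle and \cite[Theorem 3]{LSchwarz}, the lower bound via Theorem~\ref{thm:LSProp1} combined with Lemma~\ref{SinT}, the cardinality check $|S|=\#P$, and Proposition~\ref{prop:non-zero-det}, then transferring along lattice equivalence. Your closing remark about the $q$-range is a fair one to flag, since Lemma~\ref{SinT} is proved under $P \subseteq [0,q-2]^2$ (giving $q > 2\ell+1$) while the corollary is stated with $[0,q-1]^2$; this appears to be a slip in the corollary's hypothesis rather than a gap in your argument.
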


\section*{Acknowledgments}

This research was completed at the summer 2024 REU Site: Mathematical Analysis and Applications at the University of Michigan-Dearborn. We would like to thank the National Science Foundation (DMS-1950102 and DMS-2243808), the National Security Agency (H98230-24), the College of Arts, Sciences, and Letters, and the Department of Mathematics and Statistics for their support. Additionally, we thank the other participants of the REU program for fruitful conversations on this topic.

\bibliography{REUbib}

\end{document}